\newcommand{\commentout}[1]{}
\newcommand{\R}{\mathbb{R}}
\newcommand{\dis}{\displaystyle}
\newtheorem{theorem}{Theorem}
\newtheorem{lemma}[theorem]{Lemma}
\newtheorem{proposition}[theorem]{Proposition}
\newtheorem{remark}[theorem]{Remark}
\newcommand{\qed}{{ \hfill
                       {\unskip\kern 6pt\penalty 500 \raise -2pt\hbox{\vrule\vbox to 6pt{\hrule width 6pt
                       \vfill\hrule}\vrule} \par}   }}
\title{Long time evolutionary dynamics of phenotypically structured populations in time-periodic environments}
\author{
Susely Figueroa Iglesias\thanks{Institut de Math\'ematiques de Toulouse; UMR 5219, Universit\'e de Toulouse; CNRS, UPS IMT, F-31062 Toulouse Cedex 9, France; E-mail: Susely.Figueroa@math.univ-toulouse.fr}, \and Sepideh Mirrahimi\thanks{Institut de Math\'ematiques de Toulouse; UMR 5219, Universit\'e de Toulouse; CNRS, UPS IMT, F-31062 Toulouse Cedex 9, France; E-mail: Sepideh.Mirrahimi@math.univ-toulouse.fr} }
\date{\today}
\begin{document}
\maketitle
\begin{abstract}
We study the long time behavior of a parabolic Lotka-Volterra type equation considering a time-periodic growth rate with non-local competition. Such equation describes the dynamics of a phenotypically structured population under the effect of mutations and selection in a fluctuating environment. We first prove that, in long time, the solution converges to the unique periodic solution of the problem. Next, we des\-cribe this periodic solution asymptotically as the effect of the mutations vanish. Using a theory based on Hamilton-Jacobi equations with constraint, we prove that, as the effect of the mutations vanishes, the solution concentrates on a single Dirac mass, while the size of the population varies periodically in time. When the effect of the mutations is small but nonzero, we provide some formal approximations of the moments of the population's distribution. We then show, via some examples, how such results could be compared to biological experiments.
\end{abstract}
\textbf{Keywords:} Parabolic integro-differential equations; Time-periodic coefficients; Hamilton-Jacobi equation with constraint; Dirac concentrations; Adaptive evolution.\\\\
\textbf{AMS subject classifications:} 35B10; 35B27; 35K57; 92D15 
\pagenumbering{arabic}

\section{Introduction}
\subsection{Model and motivations}
The purpose of this article is to study the evolutionary dynamics of a phenotypically structured population in a time-periodic environment. While the evolutionary dynamics of populations in constant environments are widely studied (see for instance \cite{DJMP, Champagnat, DJMR, Raoul-Phd, Mirraihimi-Phd}), the theoretical results on varying environments remain limited (see however \cite{Lorenzi-Desvillettes, Souganidis}).
The variation of the environment may for instance come from the seasonal effects or a time varying administration of medications to kill cancer cells or bacteria.  Several questions arise related to the time fluctuations. Could a population survive under the fluctuating change? How the population size will be affected? Which phenotypical trait will be selected? What will be the impact of the variations of the environment on the population's phenotypical distribution?
\\

Several frameworks have been used to study the dynamics of populations under selection and mutations. Game theory has been one of the first approaches to study evolutionary dynamics \cite{Hofbauer-Sigmund, smith1982evolution}. Adaptive dynamics which is a theory based on the stability of dynamical systems allows to study evolution under rare mutations \cite{Dieckmann-Law, DiekmannBook}. Integro-differential models are used to study evolutionary dynamics of large populations (see for instance \cite{Calsina-Cuadrado, DJMR, DJMP, Magal-Webb}). Probabilistic tools allow to study populations of small size \cite{Champagnat-Lambert} and also to derive the above models in the limit of large populations \cite{Champagnat}.\\
Here, we are interested in the integro-differential approach. We study in particular the following Lotka-Volterra type model
\begin{equation}
\label{CompleteModel}
\left\{
\begin{array}{l}
\partial_t n(t,x)-\sigma\Delta  n(t,x)=n(t,x)[a(t,x)-\rho(t)],\quad(t,x)\in[0,+\infty)\times\R^d,\\
\rho(t)=\dis\int_{\R^d}n(t,x)dx,\\
n(t=0,x)=n_0(x).
\end{array}
\right.
\end{equation}
Here, $n(t,x)$ represents the density of individuals with trait $x$ at time $t$. The mutations are represented by a Laplace term with rate $\sigma$. The term $a(t,x)$ is a time-periodic function, corresponding to the net growth rate of individuals with trait $x$ at time $t$. We also consider a death term due to  competition between the individuals, whatever their traits, proportional to the total population size $\rho(t)$.\\

A main part of our work is based on an approach using Hamilton-Jacobi equations with constraint. This approach has been developed during the last decade to study asymptotically the   dynamics of populations under selection and small mutations. There is a large literature on this approach. We refer for instance to \cite{Mirraihimi-Phd, Perthame_Barles} where the basis of this approach for problems coming from evolutionary biology were established.  Note that related tools were already used to study  the propagation phenomena for local reaction-diffusion equations \cite{Evans&Souganidis, Freidlin}.\\

Our work follows an earlier article on the analysis of phenotype-structured  populations in time-varying environments \cite{Souganidis}. In \cite{Souganidis}, the authors study a similar equation to \eqref{CompleteModel} using also the Hamilton-Jacobi approach, but with a different scaling than our paper. They indeed obtain a homogenization result by  simultaneously accelerating time and letting the size of the mutations vanish. In this paper, we study first a long time limit of this equation and next we describe asymptotically such long time solutions as the effect of the mutations vanishes. Our scaling, being motivated by biological applications (see Section \ref{examples_bio}), leads to a different qualitative behavior of solutions and requires a totally different mathematical analysis.

\subsection{Assumptions}
To introduce our assumptions, we first define 
$$
\overline{a}(x)=\dis\frac{1}{T}\int_0^Ta(t,x)dt.
$$
We then assume that $a(t,x)$ is a time-periodic function with period $T$, and $C^3$ with respect to $x$, such that
\begin{equation}\tag{H1}
\label{a_W3inf}
a(t,x)=a(t+T,x),\ \forall\ (t,x)\in\R\times\R^d,\quad\text{and}\ \exists\ d_0>0: \Vert a(t,\cdot)\Vert_{L^{\infty}(\R^d)}\leq d_0\quad\forall\ t\in\R.
\end{equation}
Moreover, we suppose that there exists a unique $x_m$  which satisfies for some constant $a_m$,
\begin{equation}\tag{H2}
\label{x_m}
0< a_m \leq\max_{x\in\R^d} \overline{a}(x)=\overline{a}(x_m).
\end{equation}
In order to guarantee that the initial condition do not explode, we make the following assumption
\begin{equation}\tag{H3}
\label{n0_exp}
0\leq n_0(x)\leq e^{C_1-C_2|x|}, \quad \forall x\in\R^d,
\end{equation}
for some positive constants $C_1$, $C_2$.\\
Furthermore, just for Section \ref{NoMutations}, that is the case with $\sigma=0$, we assume additionally that
\begin{equation}\tag{H4}
\label{Hessian}
H=
\left(\frac{\partial^2 \overline{a}}{\partial x_i\partial x_j}(x_m)\right)_{i,j}\quad \text{is negative definite,}
\end{equation}
i.e., its eigenvalues are all negative. Also, let us suppose that there exist some positive constants $\delta$ and $R_0$ such that
\begin{equation}\tag{H5}
\label{a_neg}
a(t,x)\leq-\delta, \quad \text{for all } t\geq0 , \text{ and }|x|\geq R_0.
\end{equation}
Finally, let $M$ and $d_1$ be positive constants, it is assumed again for the case of no mutations, that 
\begin{equation}\tag{H6}
\label{n0_W3inf}
\Vert n_0\Vert_{W^{3,\infty}}\leq M,\qquad \Vert a\Vert_{W^{3,\infty}}\leq d_1.
\end{equation}

\subsection{Main results}

We begin the qualitative study, with a simpler case, where $\sigma=0$, which means there is no mutation. The model reads as follows
\begin{equation}
\label{model_sigma0}
\left\{
\begin{array}{l}
\partial_t n(t,x)=n(t,x)[a(t,x)-\rho(t)],\quad(t,x)\in[0,+\infty)\times\R^d,\\
\rho(t)=\dis\int_{\R^d}n(t,x)dx,\\
n(t=0,x)=n_0(x).
\end{array}
\right.
\end{equation}
Our first result is the following.
\begin{proposition}{(case $\sigma=0$)}\\
\label{PrinTheo}
Assume \eqref{a_W3inf}-\eqref{n0_W3inf}. Let $n$ be the solution of \eqref{model_sigma0}. Then,
\begin{itemize}
\item[(i)] as $t\rightarrow+\infty$, $\Vert\rho(t)-\widetilde{\varrho}(t)\Vert_{L^\infty}\rightarrow0$, where $\widetilde{\varrho}(t)$ is the unique positive periodic solution of equation
\begin{equation}
\label{SysRhoPer}
\left\{\begin{array}{l}
\dfrac{d\widetilde{\varrho}}{dt}=\widetilde{\varrho}(t)\left(a(t,x_m)-\widetilde{\varrho}(t)\right),\quad t\in(0,T),\\
\widetilde{\varrho}(0)=\widetilde{\varrho}(T),
\end{array}\right.
\end{equation}
given by
\begin{equation}
\label{Exp_rho}
\widetilde{\varrho}(t)=\frac{1-\exp\left[-\dis\int_0^{T}a(s,x_m)ds\right]}{\exp\left[-\dis\int_0^{T}a(s,x_m)ds\right]\dis\int_t^{t+T}\exp\left[\dis\int_{t}^s a(\theta,x_m)d\theta\right]ds}.
\end{equation} 
\item[(ii)] Moreover, $\dfrac{n(t,x)}{\rho(t)}$ converges weakly in the sense of measures to $\delta(x-x_m)$ as $t\rightarrow+\infty$. As a consequence,
$$ 
n(t,x)-\widetilde{\varrho}(t)\delta(x-x_m)\rightharpoonup 0\qquad\mathrm{as}\;t\rightarrow+\infty,
$$
in the sense of measures.
\end{itemize}
\end{proposition}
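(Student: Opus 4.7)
The approach exploits the fact that when $\sigma=0$ the PDE is an ODE in $t$ for each fixed $x$, so
$$n(t,x)=n_0(x)\exp\!\left(\int_0^t \bigl[a(s,x)-\rho(s)\bigr]\,ds\right),$$
and the ratio $p(t,x):=n(t,x)/\rho(t)$ admits the closed form
$$p(t,x)=\frac{n_0(x)\,e^{A(t,x)}}{\int_{\R^d} n_0(y)\,e^{A(t,y)}\,dy},\qquad A(t,x):=\int_0^t a(s,x)\,ds.$$
Decomposing $A(t,x)=t\,\overline{a}(x)+D(t,x)$ with $D(t,\cdot)$ $T$-periodic and uniformly bounded (since $\int_0^T (a-\overline{a})\,ds=0$), assumptions \eqref{x_m}, \eqref{Hessian}, \eqref{a_neg} together with the exponential decay of $n_0$ from \eqref{n0_exp} place me in the setting of Laplace's method: the tails $|x|\ge R_0$ contribute at most $O(e^{-\delta t})$, the quadratic maximum at $x_m$ dominates, and the bounded prefactor $n_0(x)\,e^{D(t,x)}$ cancels in the ratio. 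This gives $\int\phi\,p(t,\cdot)\,dx\to \phi(x_m)$ for continuous bounded $\phi$, which is (ii).

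For (i), differentiating $\rho$ yields $\rho'(t)=\rho(t)\bigl(\overline{a}_p(t)-\rho(t)\bigr)$ with $\overline{a}_p(t):=\int a(t,x)\,p(t,x)\,dx$, and applying the concentration to $\phi=a(t,\cdot)$ gives $\varepsilon(t):=\overline{a}_p(t)-a(t,x_m)\to 0$. The upper bound $\rho(t)\le\max(\rho(0),d_0)$ follows from $\rho'\le\rho(d_0-\rho)$; the lower bound is obtained by combining a Laplace lower bound for the denominator of $p$ with the identity $(\log\rho)'=\overline{a}_p-\rho$. The periodic target $\widetilde{\varrho}$ is identified as the unique positive $T$-periodic solution of the Bernoulli equation \eqref{SysRhoPer}: the substitution $w=1/\widetilde{\varrho}$ linearizes it to $w'+a(t,x_m)\,w=1$, and the condition $\int_0^T a(s,x_m)\,ds=T\,\overline{a}(x_m)>0$ provided by \eqref{x_m} ensures existence of a unique positive periodic solution and yields the explicit formula \eqref{Exp_rho}. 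I then set $v(t):=\log\rho(t)-\log\widetilde{\varrho}(t)$, which satisfies
$$v'(t)=\varepsilon(t)-\widetilde{\varrho}(t)\bigl(e^{v(t)}-1\bigr).$$
Since $\widetilde{\varrho}$ is bounded away from $0$ and $\infty$ and $v$ stays bounded, the elementary inequality $y(e^y-1)\ge c_0\,y^2$ on a bounded range produces $(v^2)'\le -c_1 v^2+C\,\varepsilon(t)^2$, and Gronwall gives $v(t)\to 0$, hence $\rho(t)-\widetilde{\varrho}(t)\to 0$ uniformly. The final assertion $n(t,x)-\widetilde{\varrho}(t)\delta(x-x_m)\rightharpoonup 0$ follows from the identity $\int\phi\,n\,dx=\rho(t)\int\phi\,p\,dx$ combined with (i) and the concentration.

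The main obstacle is making the Laplace step quantitative and uniform in time. The decay rate $\varepsilon(t)=O(1/t)$ one expects from a Laplace expansion has to hold uniformly in the phase $t\bmod T$ through the oscillating but bounded factor $e^{D(t,x)}$, and the $C^3$ regularity from \eqref{n0_W3inf} is what allows the sharp Gaussian asymptotics to feed into the Gronwall contraction. A related technicality is the uniform-in-time lower bound on $\rho(t)$: a crude comparison only prevents $\rho$ from vanishing in finite time, so one must genuinely couple Laplace's asymptotics for $\int n_0\,e^{A(t,\cdot)}$ with the ODE identity for $\log\rho$ to exclude $\rho(t)\to 0$ as $t\to\infty$.
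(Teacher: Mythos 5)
Your proposal is correct, and for the core concentration step it coincides with the paper: both write the solution explicitly, split off the Floquet/averaged part (your $A(t,x)=t\,\overline{a}(x)+D(t,x)$ is exactly the paper's $N=e^{t\overline{a}(x)}p_0(t,x)$ with $p_0=n_0e^{D}$), and run Laplace's method at the nondegenerate maximum $x_m$ using \eqref{x_m}, \eqref{Hessian}, \eqref{a_neg} and the decay of $n_0$ to get both (ii) and the key fact $\int a(t,\cdot)\,n/\rho\,dx - a(t,x_m)\to 0$. Where you genuinely diverge is the passage from the perturbed logistic ODE to convergence toward $\widetilde{\varrho}$. The paper linearizes via $\kappa=1/\rho$, samples at times $kT$ to obtain the affine recurrence $\kappa_{k+1}=\xi_k+\eta_k\kappa_k$ with $\eta_k\to\eta<1$, and concludes by a limsup/liminf squeeze on the Poincar\'e map; this simultaneously produces the eventual positive lower bound on $\rho$ and the convergence. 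You instead work in continuous time with the Lyapunov variable $v=\log(\rho/\widetilde{\varrho})$, the identity $v'=\varepsilon(t)-\widetilde{\varrho}(e^v-1)$, and a Gronwall contraction $(v^2)'\le -c_1v^2+C\varepsilon^2$; this is arguably cleaner and would even give a rate in terms of $\varepsilon$, but it is conditional on $v$ being bounded, i.e.\ on $\rho$ being bounded above \emph{and} away from zero uniformly in large time. You correctly flag that the lower bound is the delicate point; note that to obtain it you essentially have to average $(\log\rho)'=a(t,x_m)+\varepsilon(t)-\rho$ over one period and use $\int_0^Ta(s,x_m)\,ds\ge Ta_m>0$ to show $\rho(kT)$ cannot stay small — which is a discrete, period-to-period argument of the same flavor as the recurrence the paper uses. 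With that step made explicit your route closes completely; the identification of $\widetilde{\varrho}$ via the Bernoulli substitution $w=1/\widetilde{\varrho}$ and the formula \eqref{Exp_rho} is the same in both treatments.
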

This result implies that the trait with the highest time average of the net growth rate over the time interval $[0, T]$, will be selected in long time, while the size of the population oscillates with environmental fluctuations.\\ 

\noindent
To present our results for problem \eqref{CompleteModel}, we first introduce the following parabolic eigenvalue problems 
\begin{equation}
\left\{
\label{EigenPb}
\begin{array}{l}
\partial_t p-\sigma\Delta p-a(t,x)p=\lambda p,\quad\mathrm{in}\;[0,+\infty)\times\R^d,\\
0<p:\;T-\mathrm{periodic},
\end{array}
\right.
\end{equation}
\begin{equation}
\label{EqLamdaR}
\left\{
\begin{array}{cr}
\partial_t p_R-\sigma\Delta p_R-a(t,x)p_R=\lambda_R p_R,&\mathrm{in}\;[0,+\infty)\times B_R,\\
p_R=0,&\mathrm{on}\;[0,+\infty)\times\partial B_R,\\
0<p_R:\ T-\mathrm{periodic},
\end{array}
\right.
\end{equation}
where $B_R$ is the ball in $\R^d$ centered at the origin with radius $R>0$. It is known that (see \cite{hess}) if $a\in L^\infty([0,+\infty)\times B_R)$, then there exists a unique principal eigenpair $(\lambda_R,p_R)$  for \eqref{EqLamdaR} with $\Vert p_R(0,\cdot)\Vert_{L^\infty(B_R)}=1$. Moreover, as $R\to +\infty$, $\lambda_R\searrow \lambda$ and $p_R$ converges along subsequences to $p$, with $(\lambda,p)$ solution of \eqref{EigenPb} (see for instance  \cite{huska08}).
\\
We next assume a variant of hypothesis \eqref{a_neg}, 
that is, there exist positive constants, $\delta$ and $R_0$  such that
\begin{equation}\tag{$H5_\sigma$}
\label{a_lambda_neg}  
a(t,x)+\lambda\leq-\delta, \quad \text{for all } 0\leq t, \text{ and }R_0\leq |x|.
\end{equation}
Under the above additional assumption, which means that ``$a$" takes small values at infinity, the eigenpair $(\lambda,p)$ is also unique, (see Lemma \ref{existence_eigen}).\\
We next define the $T-$periodic functions $Q(t)$ and $P(t,x)$ as follows
\begin{equation}
\label{PyQ}
Q(t)=\dfrac{\int_{\R^d}a(t,x)p(t,x)dx}{\int_{\R^d}p(t,x)dx},\quad P(t,x)=\dfrac{p(t,x)}{\int_{\R^d}p(t,x)dx}.
\end{equation}
We deduce from previous Proposition that if and only if $\int_0^TQ(t)>0$, then there exists a unique positive periodic solution $\widetilde{\rho}(t)$ for the problem 
$$
\left\{\begin{array}{l}
\dfrac{d\widetilde{\rho}}{dt}=\widetilde{\rho}\left[Q(t)-\widetilde{\rho}\right],\quad t\in(0,T),\\
\widetilde{\rho}(0)=\widetilde{\rho}(T).
\end{array}\right.
$$
We can then describe the long time behavior of the solution of \eqref{CompleteModel}
\begin{proposition}{(case $\sigma>0$, long time behavior)}\\
\label{PrinTheoSect2}
Assume \eqref{a_W3inf}, \eqref{x_m}, \eqref{n0_exp} and \eqref{a_lambda_neg}. Let $n$ be the solution of \eqref{CompleteModel}, then 
\begin{itemize}
\item[(i)] if $\lambda\geq 0$ then the population will go extinct, i.e.
$\rho(t)\rightarrow0,$ as $t\rightarrow\infty$, 
\item[(ii)] if $\lambda<0$ then $\vert\rho(t)-\widetilde{\rho}(t)\vert\rightarrow0$, as $t\rightarrow\infty.$
\item[(iii)] Moreover $\quad\left\Vert\dfrac{n(t,x)}{\rho(t)}-P(t,x)\right\Vert_{L^\infty}\longrightarrow0$, as $t\rightarrow\infty$. Consequently we have, as $t\rightarrow\infty$
\begin{equation}
\label{convPerio}
\Vert n(t,\cdot)-\widetilde{\rho}(t)P(t,\cdot)\Vert_{L^\infty}\rightarrow 0,\:\mathrm{if}\:\lambda<0\quad\mathrm{and}\quad\Vert n\Vert_{L^\infty}\rightarrow0,\:\mathrm{if}\:\lambda\geq0.
\end{equation}
\end{itemize} 
\end{proposition}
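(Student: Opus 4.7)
The plan is to reduce the nonlinear problem to a linear parabolic equation with periodic coefficients and then to exploit the eigenpair $(\lambda,p)$ from \eqref{EigenPb} together with its adjoint. Setting
\[ v(t,x):=n(t,x)\exp\Bigl(\int_0^t\rho(s)\,ds\Bigr),\qquad g(t,x):=\frac{e^{\lambda t}\,v(t,x)}{p(t,x)}, \]
a direct computation shows that $v$ solves the linear equation $\partial_t v-\sigma\Delta v=a(t,x)\,v$ with $v(0,\cdot)=n_0$, and, using the eigenvalue equation for $p$, that the potential $a$ cancels in the equation for $g$, which becomes the pure drift--diffusion
\[ \partial_t g=\sigma\Delta g+2\sigma\,\frac{\nabla p}{p}\cdot\nabla g. \]
The long-time behaviour of $n$ is therefore entirely encoded in the convergence of $g$ to a constant.

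The next step is to construct the adjoint eigenpair $(\lambda,p^*)$ (solving the same principal eigenvalue problem with time reversed; existence, positivity and exponential decay at infinity follow from \eqref{a_lambda_neg} by the same arguments as for $p$), and to introduce the weight $\mu:=p\,p^*$. A computation using the equations for $p$ and $p^*$ gives $\partial_t\mu=\sigma\nabla\!\cdot\!(p^*\nabla p-p\nabla p^*)$, and a short integration by parts shows that $\int_{\R^d}g\,\mu\,dx$ is conserved in time. This immediately identifies the candidate limit
\[ \alpha:=\frac{\int_{\R^d}n_0(x)\,p^*(0,x)\,dx}{\int_{\R^d}p(0,x)\,p^*(0,x)\,dx}. \]
Setting $h:=g-\alpha$ and redoing the calculation with $h^2$, the contribution of $\partial_t\mu$ cancels exactly against the drift term, producing the clean dissipation identity
\[ \frac{d}{dt}\int_{\R^d}h^2\,\mu\,dx=-2\sigma\int_{\R^d}|\nabla h|^2\,\mu\,dx. \]
Since $\mu$ decays exponentially at infinity and $\int h\,\mu\,dx=0$ for all $t$, a weighted Poincar\'e inequality (uniform in $t\in[0,T]$ by periodicity) yields exponential convergence of $\|h(t,\cdot)\|_{L^2(\mu)}$ to $0$. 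Parabolic regularity estimates applied period by period then upgrade this to $\|g(t,\cdot)-\alpha\|_{L^\infty(\R^d)}\to 0$.

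Unravelling the changes of variables gives $n(t,x)=e^{-\int_0^t\rho(s)\,ds}\,e^{-\lambda t}\,p(t,x)\,(\alpha+o(1))$ uniformly in $x$, and dividing by $\rho(t)$ cancels the unknown exponential factors to produce $\|n(t,\cdot)/\rho(t)-P(t,\cdot)\|_{L^\infty}\to 0$, which is (iii). Integrating the equation for $n$ in $x$ gives $\rho'(t)=\rho(t)\bigl(\overline Q(t)-\rho(t)\bigr)$ with $\overline Q(t):=\int_{\R^d}a(t,x)\,n(t,x)\,dx/\rho(t)$, and (iii) implies $\overline Q(t)-Q(t)\to 0$. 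Integrating the eigenvalue equation \eqref{EigenPb} in $x$ over one period shows that $\int_0^TQ(t)\,dt=-\lambda T$, so the sign of $\int_0^TQ$ is opposite to the sign of $\lambda$. A standard argument for the asymptotically periodic logistic ODE above then yields $\rho\to 0$ when $\lambda\ge 0$ (case (i)) and $\rho\to\widetilde\rho$ when $\lambda<0$ (case (ii)); combining with (iii) gives \eqref{convPerio}.

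The main obstacle is the analytic step 2: one needs the adjoint eigenpair and enough control on the exponential decay at infinity of $p$, $p^*$ (from \eqref{a_lambda_neg} via a barrier argument) and of $n(t,\cdot)$ itself (via comparison with a super-solution of the linearised equation built on the initial bound \eqref{n0_exp}) to justify the integrations by parts, the weighted Poincar\'e inequality on the whole of $\R^d$, and the upgrade from weighted $L^2$ to uniform-in-$x$ convergence. Once the dissipation identity and the uniform convergence of $g$ to $\alpha$ are established, the perturbation argument for the logistic equation in step 3 is classical.
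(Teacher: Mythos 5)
Your overall architecture coincides with the paper's up to the key analytic lemma: both proofs linearize via $m=n\,e^{\int_0^t\rho}$, show that $m\,e^{\lambda t}$ converges to $\alpha p$ uniformly in $x$, deduce (iii) by dividing by $\rho=\int n$, identify $\lambda=-\frac1T\int_0^TQ(t)dt$ by integrating the eigenvalue problem, and close (i)--(ii) with the asymptotically periodic logistic ODE argument already used for Proposition \ref{PrinTheo}. Where you genuinely differ is in how the convergence $m\,e^{\lambda t}\to\alpha p$ is obtained. The paper does not prove it by hand: it invokes the exponential separation / principal Floquet bundle theory of H\'uska--Pol\'a\v{c}ik (Theorems 2.1, 2.2 and 9.1 of \cite{huska08}) applied to the shifted equation \eqref{m_tilda}, and only proves by a barrier argument the upper decay bound \eqref{BoundSup_p}. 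You instead propose a self-contained generalized relative entropy argument: the adjoint eigenfunction $p^*$, the weight $\mu=pp^*$, conservation of $\int g\mu$, the dissipation identity for $\int(g-\alpha)^2\mu$, and a weighted Poincar\'e inequality. This is a legitimate and in some ways more informative route --- it identifies the limiting constant $\alpha$ explicitly as $\int n_0p^*(0,\cdot)/\int p(0,\cdot)p^*(0,\cdot)$, which the paper never does --- at the price of more analysis on the whole space.

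The one concrete gap is that your scheme needs a global exponential \emph{lower} bound on $p$ (and on $p^*$), not just the upper bound \eqref{BoundSup_p} that the barrier argument provides. Indeed $g^2\mu=(e^{\lambda t}v)^2\,p^*/p$, so without $p\geq c\,e^{-C|x|}$ the quantity you differentiate may fail to be finite; and a weight that is merely dominated by $e^{-c|x|}$ but might degenerate faster does not in general satisfy a Poincar\'e inequality, so the spectral-gap step collapses. The same lower bound is what lets you upgrade $\|g-\alpha\|_{L^2(\mu)}\to0$ to the uniform statement needed for (iii): interior parabolic regularity gives only locally uniform convergence of $g$, and in the tails you must control $g$ (equivalently $n/p$) to argue that both $n/\rho$ and $P$ are uniformly small there. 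Such a lower bound does hold under \eqref{a_W3inf} and \eqref{a_lambda_neg} (via a subsolution or Harnack-chain argument), but it is an extra estimate you must actually prove --- the paper sidesteps it entirely by quoting \cite{huska08}, and its own lower bound \eqref{BoundInf_p} is only local. Once you supply that estimate, your argument closes, and the final ODE step for (i)--(ii) is identical to the paper's.
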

\begin{remark}
\label{Rk_in_a}
Assuming \eqref{x_m} implies that $\lambda<0$, provided $\sigma$ is small enough. 
\end{remark}
We prove this remark in Lemma \ref{lam_eps_neg}.\\

\noindent
Proposition \ref{PrinTheoSect2} guarantees, when $\lambda<0$, the convergence in $L^\infty-$norm of the solution $n(t,x)$ of the equation \eqref{CompleteModel} to the periodic function $\widetilde{n}(t,x)=\widetilde{\rho}(t)P(t,x)$ and it is not difficult to verify that $\widetilde{n}$ is in fact a solution of \eqref{CompleteModel}.\\ 
We next describe the periodic solution  $\widetilde{n}$, asymptotically as the effect of mutations is small. To this end, with a change of notation, we take $\sigma=\varepsilon^2$ and study $(n_\varepsilon,\rho_\varepsilon)$, the unique periodic solution of the following equation
\begin{equation}
\label{Pb_n_Epsilon1}
\left\{
\begin{array}{lr}
\partial_t n_\varepsilon-\varepsilon^2\Delta n_\varepsilon=n_\varepsilon[a(t,x)-\rho_\varepsilon(t)],& (t,x)\in[0,+\infty)\times\R^d,\\
\rho_\varepsilon(t)=\dis\int_{\R^d}n_\varepsilon(t,x)dx,\\
n_\varepsilon(0,x)=n_\varepsilon(T,x).
\end{array}
\right.
\end{equation}
We expect that $n_\varepsilon(t, x)$ concentrates as a Dirac mass as $\varepsilon\rightarrow0$.\\
In order to study the limit of $n_\varepsilon$, as $\varepsilon\rightarrow0$,   we make the Hopf-Cole transformation
\begin{equation}
\label{HopfCole}
n_\varepsilon=\frac{1}{(2\pi\varepsilon)^{d/2}}\exp{\left(\frac{u_\varepsilon}{\varepsilon}\right)},
\end{equation} 
which allows us to prove
\begin{theorem}{(case $\sigma=\varepsilon^2$, asymptotic behavior)}\\
\label{Prin_Theo_ueps}
Let $n_\varepsilon$ solve \eqref{Pb_n_Epsilon1} and assume \eqref{a_W3inf}, \eqref{x_m} and \eqref{a_lambda_neg}. Then 
\begin{itemize}
\item[(i)] As $\varepsilon\rightarrow0$, we have
\begin{equation}
\label{ConvTheo_eps}
\Vert\rho_\varepsilon(t)-\widetilde{\varrho}(t)\Vert_{L^\infty}\rightarrow0,\quad\mathrm{and}\quad n_\varepsilon(t,x)-\widetilde{\varrho}(t)\delta(x-x_m)\rightharpoonup 0,
\end{equation}
point wise in time, weakly in $x$ in the sense of measures, with $\widetilde{\varrho}(t)$ given by \eqref{Exp_rho}.
\item[(ii)] Moreover as $\varepsilon\rightarrow0$, $u_\varepsilon$ converges locally uniformly to a function $u(x)\in C(\R)$, the unique viscosity solution to the following equation
\begin{equation}
\label{LimitEq}
\left\{\begin{array}{l}
-|\nabla u|^2=\dfrac{1}{T}\dis\int_0^T(a(t,x)-\widetilde{\varrho}(t))dt,\quad x\in\R^d,\\
\dis\max_{x\in\R^d}u(x)=u(x_m)=0.
\end{array}
\right.
\end{equation}
In the case $x\in\R$, $u$ is indeed a classical solution and is given by
\begin{equation}
\label{Exp_Sol_u}
u(x)=-\left\vert\int_{x_m}^x\sqrt{-\overline{a}(x')+\overline{\varrho}}\;dx'\right\vert
\end{equation} 
where $\overline{\varrho}=\frac{1}{T}\int_0^T\widetilde{\varrho}(t)dt$.
\end{itemize}
\end{theorem}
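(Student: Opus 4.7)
The plan is to use Proposition \ref{PrinTheoSect2} together with the spectral analysis of \eqref{EigenPb} for small $\varepsilon$ to establish (i), and then study the Hopf--Cole transformed equation to obtain the Hamilton--Jacobi limit in (ii).

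For part (i), apply Proposition \ref{PrinTheoSect2} to \eqref{Pb_n_Epsilon1} with $\sigma=\varepsilon^2$. By Remark \ref{Rk_in_a}, the principal eigenvalue $\lambda_\varepsilon$ is negative for $\varepsilon$ small, and the unique periodic solution factorizes as $n_\varepsilon(t,x)=\widetilde\rho_\varepsilon(t)\,P_\varepsilon(t,x)$, with $\widetilde\rho_\varepsilon$ solving the logistic periodic ODE driven by $Q_\varepsilon(t)$ defined in \eqref{PyQ}, and $P_\varepsilon$ the normalised $T$-periodic eigenfunction of \eqref{EigenPb} with $\sigma=\varepsilon^2$. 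The key ingredient is concentration of $p_\varepsilon$ at $x_m$ as $\varepsilon\to0$: a WKB-type argument exploiting \eqref{x_m} and the decay condition \eqref{a_lambda_neg} gives $P_\varepsilon(t,\cdot)\rightharpoonup\delta(\cdot-x_m)$ in measures and $Q_\varepsilon(t)\to a(t,x_m)$ uniformly in $t\in[0,T]$. Continuous dependence of the periodic logistic ODE on its coefficient then yields $\|\widetilde\rho_\varepsilon-\widetilde\varrho\|_{L^\infty}\to 0$, which combined with the weak convergence of $P_\varepsilon$ gives \eqref{ConvTheo_eps}.

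For part (ii), substituting \eqref{HopfCole} in \eqref{Pb_n_Epsilon1}, the function $u_\varepsilon$ is $T$-periodic in $t$ and satisfies
\begin{equation*}
\partial_t u_\varepsilon-\varepsilon\Delta u_\varepsilon=|\nabla u_\varepsilon|^2+a(t,x)-\rho_\varepsilon(t).
\end{equation*}
The core technical step is to establish locally uniform $L^\infty$ and Lipschitz-in-$x$ bounds on $u_\varepsilon$, together with a bound of the form $\operatorname{osc}_t u_\varepsilon(\cdot,x)=O(\varepsilon)$ on compacts. Upper bounds follow by comparison against a quadratic barrier, available thanks to \eqref{a_lambda_neg}; gradient bounds by a Bernstein-type argument on the periodic problem, namely applying the parabolic maximum principle to $|\nabla u_\varepsilon|^2$ on a space-time cylinder after differentiating the equation. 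For the time-oscillation estimate I would use the factorisation $u_\varepsilon=\varepsilon\log P_\varepsilon(t,x)+\varepsilon\log\widetilde\rho_\varepsilon(t)+\tfrac{d}{2}\varepsilon\log(2\pi\varepsilon)$: the second and third terms are $O(\varepsilon)$ since $\widetilde\rho_\varepsilon$ is bounded away from $0$ and $\infty$, while the first is controlled by a parabolic Harnack inequality for $p_\varepsilon$ with constants independent of $\varepsilon$. This Harnack step is the part I expect to be the main obstacle, because in standard parabolic Harnack estimates the constants deteriorate as the diffusion coefficient $\varepsilon^2$ vanishes, so one has to exploit the eigenpair structure to obtain uniform bounds.

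With these estimates in hand, Arzel\`a--Ascoli extracts along subsequences a locally uniform limit $u_\varepsilon\to u$, which is independent of $t$ by the $O(\varepsilon)$ oscillation bound. Integrating the equation over one period and using periodicity kills $\partial_t u_\varepsilon$, leaving
\begin{equation*}
-\varepsilon\Delta\overline{u}_\varepsilon=\overline{|\nabla u_\varepsilon|^2}+\overline{a}(x)-\overline{\rho}_\varepsilon,
\end{equation*}
and stability of viscosity solutions under uniform convergence, combined with $\overline\rho_\varepsilon\to\overline\varrho$, passes to the limit to yield $-|\nabla u|^2=\overline{a}(x)-\overline\varrho$ in the viscosity sense. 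The constraint $\max u=u(x_m)=0$ follows from part (i): the Dirac-mass concentration forces $u(x_m)=0$, while the exponential decay of $n_\varepsilon$ away from $x_m$ forces $u<0$ elsewhere. Uniqueness of the constrained solution is classical in this framework. Finally, in dimension one, time-averaging the ODE \eqref{SysRhoPer} gives $\overline\varrho=\overline{a}(x_m)$, so the Hamilton--Jacobi equation becomes $|u'|^2=\overline{a}(x_m)-\overline{a}(x)\geq0$; the maximum constraint at $x_m$ forces the sign choice $u'(x)=-\operatorname{sgn}(x-x_m)\sqrt{\overline\varrho-\overline{a}(x)}$, which integrates to the explicit formula \eqref{Exp_Sol_u}.
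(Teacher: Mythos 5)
Your proposal gets the overall architecture right (Hopf--Cole transform, regularity, Arzel\`a--Ascoli, viscosity limit, constrained uniqueness), but it has three genuine gaps and differs from the paper in a way worth spelling out.

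First, your treatment of part (i) is circular in the sense that it asserts exactly the hard part. You write that ``a WKB-type argument gives $P_\varepsilon(t,\cdot)\rightharpoonup\delta(\cdot-x_m)$ and $Q_\varepsilon(t)\to a(t,x_m)$''; but proving this concentration for the periodic eigenfunction $p_\varepsilon$ with $\varepsilon^2$ diffusion is essentially equivalent to the Hamilton--Jacobi analysis of part (ii) applied to $\varepsilon\log p_\varepsilon$, and needs the same regularity and stability work. The paper's logical order is the reverse of yours: it first develops (ii) (regularity of $u_\varepsilon$, extraction of limits $u$ and $\rho$ along subsequences, derivation of the constrained Hamilton--Jacobi equation, uniqueness), then uses the structure of $u$ --- namely $u<0$ away from $x_m$ --- to prove the Dirac concentration of $n_\varepsilon/\rho_\varepsilon$, and only at the end identifies $\rho$ by passing to the limit in $Q_\varepsilon(t)=\rho_\varepsilon(t)^{-1}\int a\,n_\varepsilon\,dx$. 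If you want to prove (i) first by an independent spectral/WKB route, you must actually supply that argument; as written it is a placeholder.

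Second, the time-averaging step in your derivation of the limit equation does not produce a viscosity-solution statement. After integrating the $u_\varepsilon$-equation over a period you obtain $-\varepsilon\Delta\overline{u}_\varepsilon=\overline{|\nabla u_\varepsilon|^2}+\overline a(x)-\overline\rho_\varepsilon$, but $\overline{|\nabla u_\varepsilon|^2}$ is not a differential operator acting on $\overline u_\varepsilon$ (the time-average of a square is not the square of a gradient of any averaged function), so ``stability of viscosity solutions under uniform convergence'' cannot be applied. This is precisely why the paper uses a perturbed test function: it introduces the cell corrector $v$ solving $\partial_t v=a(t,x)-\rho(t)-\overline a(x)+\overline\rho$ and tests with $\psi_\varepsilon=\phi+\varepsilon v$ at a local extremum of $u_\varepsilon-\psi_\varepsilon$. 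The corrector absorbs the $O(1)$ time-oscillation of the right-hand side, and the equation on $u$ emerges rigorously. Without such a corrector the passage to the limit in $|\nabla u_\varepsilon|^2$ is not justified.

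Third, your Lipschitz and time-equicontinuity estimates are undersupplied. A Bernstein argument directly on $|\nabla u_\varepsilon|^2$ is problematic because $u_\varepsilon$ is only bounded above uniformly (linearly, via the exponential decay of $p_\varepsilon$ and a scaled Harnack near the normalization point); it satisfies a quadratic lower bound $-A_1|x|^2-A_2$, so it is unbounded below. The paper works with $\omega_\varepsilon=\sqrt{2D-u_\varepsilon}$ precisely to obtain a quantity bounded away from zero on which Bernstein can be run with a well-chosen supersolution on parabolic cylinders up to time $T_M/\varepsilon$. For the time-oscillation bound, you are right that the naively applied parabolic Harnack inequality has $\varepsilon$-dependent constants, but the fix is not ``exploit the eigenpair structure'' in some vague way: it is the spatial rescaling $q_\varepsilon(t,y)=p_\varepsilon(t,\varepsilon y)$, which converts the $\varepsilon^2$-diffusion problem into a unit-diffusion problem with uniformly bounded zeroth-order coefficient, making the Harnack constant universal on unit balls. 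Even with this, the paper does not actually use Harnack for equicontinuity in time; it uses a Barles--Biton--Ley style argument with quadratic space-time test functions $\widehat\xi(t,y)=u_\varepsilon(s,x)+\eta+A|x-y|^2+\varepsilon B(t-s)$ to get $|u_\varepsilon(t,x)-u_\varepsilon(s,x)|\leq\eta+\varepsilon B|t-s|$ on compacts, which is what forces the limit $u$ to be time-independent. If you prefer the Harnack route, you still owe a covering argument in $x$ over compacts and a justification that the constants remain bounded as $\varepsilon\to0$; it is not automatic.

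Minor points: your upper barrier for $u_\varepsilon$ should be linear in $|x|$, not quadratic; the quadratic one appears in the lower bound, coming from the heat kernel. And the identity $\overline\varrho=\overline a(x_m)$ used in the one-dimensional formula is correct but deserves a word: it follows from $\lambda=-\frac1T\int_0^T Q$ and the concentration of $Q_\varepsilon$, or directly by averaging $\eqref{SysRhoPer}$.
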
 

\noindent
To prove Theorem \ref{Prin_Theo_ueps}, we first prove some regularity estimates on $u_\varepsilon$ and then  pass to the limit in the viscosity sense using the method of perturbed test functions. We finally show that \eqref{LimitEq} has a unique solution, and hence all the sequence converges. Note that in order to prove regularity estimates on $u_\varepsilon$, a difficulty comes from the fact that $u_\varepsilon$ is time-periodic and one cannot use, similarly to previous related works \cite{MAA, lorz}, the bounds on the initial condition to obtain such bounds for all time and further work is required.

\subsection{Some heuristics and the plan of the paper }

We next provide some heuristic computations which allow to better understand Theorem \ref{Prin_Theo_ueps}, but also suggest an approximation of the population's distribution $n_\varepsilon$, when $\varepsilon$ is small but nonzero.\\
Replacing \eqref{HopfCole} in \eqref{Pb_n_Epsilon1}, we first notice that $u_\varepsilon$ solves
\begin{equation}
\label{Pb_u_Epsilon}
\left\{
\begin{array}{rllr}
\frac{1}{\varepsilon}\partial_t u_\varepsilon-\varepsilon\Delta u_\varepsilon&=&|\nabla u_\varepsilon|^2+a(t,x)-\rho_\varepsilon(t),&(t,x)\in[0,+\infty)\times\R^d,\\
u_\varepsilon(t=0,x)&=&u_\varepsilon^0(x)=\varepsilon\ln n_\varepsilon^0(x).
\end{array}
\right.
\end{equation}
We then write formally an asymptotic expansion for $u_\varepsilon$ and $\rho_\varepsilon$ in powers of $\varepsilon$
\begin{equation}
\label{App_u}
u_\varepsilon(t,x)=u(t,x)+\varepsilon v(t,x)+\varepsilon^2 w(t,x)+o(\varepsilon^2),\quad\rho_\varepsilon(t)=\rho(t)+\varepsilon\kappa(t)+o(\varepsilon),
\end{equation}
where the coefficients of the developments are time-periodic.\\
We substitute in \eqref{Pb_u_Epsilon} and organize by powers of $\varepsilon$, that is
$$
\frac{1}{\varepsilon}\left(\partial_t u(t,x)\right)+\varepsilon^0\left[\partial_t v(t,x)-|\nabla u|^2-a(t,x)+\rho(t)\right]+\varepsilon\left[\partial_t w-\Delta u-2\nabla u\cdot \nabla v+\kappa(t)\right]+o(\varepsilon^2)=0.
$$
From here we obtain
$$
\partial_t u(t,x)=0\Leftrightarrow u(x,t)=u(x),
$$
and
$$
\partial_t v(t,x)=|\nabla u|^2+a(t,x)-\rho(t).
$$
Integrating this latter equation in $t\in[0,T]$, we obtain that
$$
0=\int_0^T|\nabla u|^2dt+\int_0^Ta(t,x)dt-\int_0^T\rho(t)dt,
$$
because of the $T-$periodicity of $v$. This implies that
$$
-|\nabla u|^2=\frac{1}{T}\int_0^T(a(t,x)-\rho(t))dt,
$$
which is the first equation in \eqref{LimitEq}.
Keeping next the terms of order $\varepsilon$ we obtain that
$$
\partial_t w-\Delta u=2\nabla u\cdot \nabla v-\kappa(t),
$$
and again integrating in $[0,T]$ we find
$$
-\Delta u=\frac{2}{T}\nabla u\int_0^T\nabla vdt-\overline{\kappa},\qquad\mathrm{with}\quad\overline{\kappa}=\dis\frac{1}{T}\int_0^T\kappa(t)dt.
$$
Evaluating the above equation at $x_m$ we obtain that 
$$
\Delta u(x_m)=\overline{\kappa}.
$$
Then, using the averaged coefficients 
$\overline{a}(x)=\frac{1}{T}\int_0^Ta(t,x)dt$ and $\overline{\rho}=\frac{1}{T}\int_0^T\rho(t)dt$, 
we deduce, combining the above computations, that  $v(t,x)$ satisfies
\begin{equation}
\label{Eq_v_Lap_u}
\left\{
\begin{array}{rcl}
\partial_t v&=&a(t,x)-\overline{a}(x)-\rho(t)+\overline{\rho},\\
-\Delta u&=&\dfrac{2}{T}\dis\int_0^T\nabla u\cdot\nabla v dt-\overline{\kappa},
\end{array}
\right.
\end{equation}
which allows to determine $v$.\\
We will use these formal expansions in Section \ref{Moments}, to estimate the moments of the population's distribution using the Laplace's method of integration. Note that such approximations were already used to study the phenotypical distribution of a population in a spatially heterogeneous environment \cite{Mirrahimi17, Mirrahimi&Gandon} (see also \cite{Mirrahimi-Roquejoffre-Cras} where such type of approximation was first suggested).  We next show, via two examples, how such results could be interpreted biologically. In particular, our work being motivated by a biological experiment in  \cite{Ketola}, we suggest a possible explanation for a phenomenon observed in this experiment.

\bigskip

\noindent
The paper is organized as follows. In Section \ref{NoMutations} we deal with problem \eqref{model_sigma0}  and prove Proposition \ref{PrinTheo}. In Section \ref{SigmaPositive} we study the long time behavior of  \eqref{CompleteModel} and provide the proof of Proposition \ref{PrinTheoSect2}. Next, in Section \ref{SmallMutations} we study the asymptotic behavior of $n_\varepsilon$ as $\varepsilon\to 0$, and  prove Theorem \ref{Prin_Theo_ueps}. In Section \ref{Moments}, we use the above formal arguments to estimate the moments of the population's phenotypical distribution. Finally we use these results in Section \ref{examples_bio} to study two biological examples considering two different growth rates. 

\section{The case with no mutations}
\label{NoMutations}
In this section we study the qualitative behavior of \eqref{model_sigma0}, where $\sigma=0$, and provide the proof of Proposition \ref{PrinTheo}.\\
To this end, we define $N(t,x)=n(t,x)e^{\int_0^t\rho(s)ds}$ which solves
$$
\partial_t N=a(t,x)N(t,x).
$$
From the periodicity of $a$ and the Floquet theory we obtain that $N$ has the following form
$$
N(t,x)=e^{\mu(x)t}p_0(t,x),\quad\mathrm{with}\quad p_0(0,x)=p_0(T,x),\quad\mathrm{and}\quad \mu(x)=\overline{a}(x)=\frac{1}{T}\int_0^Ta(s,x)ds.
$$

\subsection{Long time behavior of $\rho$}
In this subsection we prove Proposition \ref{PrinTheo}  $(i)$.\\ Integrating equation \eqref{model_sigma0} with respect to $x$ we obtain
\begin{equation}
\label{Eq_Rho}
\frac{d}{dt} \rho(t)=\int_{\R^d}n(t,x)a(t,x)dx-\rho(t)^2=\rho(t)\left[\int_{\R^d}\frac{n(t,x)a(t,x)}{\rho(t)}dx-\rho(t)\right].
\end{equation} 
Then we claim the following Lemma that we prove at the end of this subsection.
\begin{lemma}
\label{TechPropSect2}
Assume \eqref{a_W3inf}-\eqref{n0_exp} and \eqref{n0_W3inf} then 
$$
\left\vert\int_{\R^d}\frac{n(t,x)a(t,x)}{\rho(t)}dx -a(t,x_m)\right\vert\longrightarrow 0,\quad\mathrm{as}\;t\rightarrow+\infty.
$$
\end{lemma}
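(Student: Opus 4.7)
My strategy is to use the Floquet representation derived just before the lemma to write $n(t,\cdot)/\rho(t)$ as a Gibbs-type probability density, apply a Laplace-method argument to show that this density concentrates at $x_m$ as $t\to\infty$, and then conclude using the uniform Lipschitz continuity of $a$ in $x$.

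More concretely, from $\partial_tN=aN$ with $N(t,x)=n(t,x)e^{\int_0^t\rho(s)ds}$ I would write
$$\frac{n(t,x)}{\rho(t)} \;=\; \frac{n_0(x)\,e^{t\overline{a}(x)+q(t,x)}}{\int_{\R^d}n_0(x')\,e^{t\overline{a}(x')+q(t,x')}\,dx'} \;=:\;\mu_t(x),\qquad q(t,x):=\int_0^t\!\bigl(a(s,x)-\overline{a}(x)\bigr)ds.$$
The key observation is that $q(t+T,x)=q(t,x)$ by the very definition of $\overline{a}$, so \eqref{a_W3inf} yields the uniform bound $|q(t,x)|\leq 2d_0T$ on $\R\times\R^d$; hence $e^{q}$ contributes only a uniformly bounded multiplicative correction to the Laplace analysis, and $\mu_t$ behaves, up to bounded factors, like a Gibbs measure with potential $-t\,\overline{a}$.

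The central task is to show $\mu_t(V^c)\to 0$ as $t\to\infty$ for every open neighborhood $V$ of $x_m$. I would split $V^c$ into a bounded shell $\{r<|x-x_m|\leq R_0\}$ and a tail $\{|x|>R_0\}$. On the shell, the uniqueness \eqref{x_m} of the maximizer combined with continuity of $\overline{a}$ yields $\sup_{\text{shell}}\overline{a}\leq\overline{a}(x_m)-\eta_r$ for some $\eta_r>0$; on the tail, \eqref{a_neg} gives $\overline{a}\leq-\delta$, and the pointwise exponential bound on $n_0$ from \eqref{n0_exp} tames the integral. For the denominator I would restrict to a small ball $B_r(x_m)$ and use the negative definite Hessian \eqref{Hessian} together with the $C^3$ regularity of $a$ from \eqref{n0_W3inf} to Taylor-expand $\overline{a}(x)\geq\overline{a}(x_m)-C|x-x_m|^2$. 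Choosing $r$ small enough (so that in particular $Cr^2<\eta_r/2$ and $Cr^2<\overline{a}(x_m)+\delta$), all exponents in the ``numerator over denominator'' ratio are strictly negative and $\mu_t(V^c)$ decays exponentially in $t$.

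Once the concentration is in hand, the lemma follows by splitting
$$\int\mu_t(x)\bigl(a(t,x)-a(t,x_m)\bigr)\,dx$$
over $B_r(x_m)$ and its complement: the first integral is bounded by $d_1 r$ via the uniform Lipschitz estimate $\|\nabla_x a\|_\infty\leq d_1$ from \eqref{n0_W3inf}, and the second by $2d_0\,\mu_t(B_r(x_m)^c)$ using \eqref{a_W3inf}. Given any target error, choosing $r$ small and then $t$ large yields the desired limit. The one point I anticipate as delicate is the lower bound on the denominator, which requires $n_0$ to be non-trivially positive in a neighborhood of $x_m$; this appears to be an implicit hypothesis of the setting, without which the concentration statement of Proposition \ref{PrinTheo}(ii) could not itself hold.
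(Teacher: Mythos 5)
Your argument is correct in substance but takes a genuinely softer route than the paper's. The paper also starts from the Floquet representation $N(t,x)=e^{t\overline a(x)}p_0(t,x)$, but it then carries out a full Laplace expansion: Taylor expansions of $a$, $p_0$ and $\overline a$ to third order around $x_m$, the change of variables $y=\sqrt t\,(x-x_m)$, and a term-by-term estimate of the resulting integrals, yielding the quantitative conclusion $\int n a/\rho = a(t,x_m)+O(1/\sqrt t)$; this is precisely where \eqref{Hessian} and the $W^{3,\infty}$ bounds of \eqref{n0_W3inf} are used. You replace all of that by a concentration estimate $\mu_t(V^c)\to 0$ combined with the Lipschitz continuity of $a$ in $x$, which proves exactly the stated limit (without a rate) under visibly weaker regularity: only continuity of $\overline a$, the global bound on $\nabla_x a$, and \eqref{a_neg} for the tail are needed, and \eqref{Hessian} is in fact dispensable in your scheme if the denominator is bounded below by continuity alone. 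Since the rate is never used later (the proof of Proposition \ref{PrinTheo}(i) only needs $\Sigma(t)\to 0$), nothing is lost. Both arguments share the implicit hypothesis that $n_0$ does not vanish near $x_m$ (the paper divides by $p_0(t,x_m)=n_0(x_m)e^{q(t,x_m)}$), so you are right to flag it; your uniform bound $|q|\le 2d_0T$ by $T$-periodicity of $q$ is also exactly the mechanism that makes $p_0$ harmless in the paper's version.

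One small repair is needed: as written, the condition ``$Cr^2<\eta_r/2$'' cannot be arranged by shrinking a single radius $r$. When $\overline a$ is exactly quadratic near $x_m$, one has $\eta_r\sim\tfrac12\lambda_{\min}(-H)\,r^2$ while any valid constant in the lower Taylor bound satisfies $C\ge\tfrac12\lambda_{\max}(-H)$, so $Cr^2\ge\eta_r$ for all small $r$. The fix is to decouple the two radii: fix $r$ and hence $\eta_r>0$ for the shell, then choose $r'<r$ (by continuity of $\overline a$ alone) such that $\overline a\ge\overline a(x_m)-\eta_r/2$ on $B_{r'}(x_m)$, and take the denominator lower bound over $B_{r'}(x_m)$. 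The shell-to-denominator ratio is then $O(e^{-t\eta_r/2})$ and the tail-to-denominator ratio is $O\bigl(e^{-t(\delta+\overline a(x_m)-\eta_r/2)}\bigr)$, both of which vanish since $\overline a(x_m)\ge a_m>0$. With that adjustment your proof is complete.
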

\begin{proof} (Proposition \ref{PrinTheo})(i)\\
From Lemma \ref{TechPropSect2},  $\rho(t)$ satisfies
$$
\frac{d}{dt}\rho(t)=\rho(t)[a(t,x_m)+\Sigma(t)-\rho(t)],
$$
where 
$\Sigma(t)\rightarrow 0$ as $t\rightarrow\infty$.
In order to prove the convergence to a periodic function, we adapt a method introduced in \cite{Lopez}.\\
After a standard substitution $\kappa(t)=1/\rho(t)$ in order to linearize the latter equation, and integration with the help of an integrating factor, the solution $\rho$ can be written as follows
$$
\frac{1}{\rho(t)}=\exp\left(-\int_0^t(a(s,x_m)+\Sigma(s))ds\right)\left(\frac{1}{\rho_0}+\int_0^t\exp\left(\int_0^s(a(\theta,x_m)+\Sigma(\theta))d\theta\right)ds\right).
$$
We then write $\dfrac{1}{\rho((k+1)T)}$ as function of $\dfrac{1}{\rho(kT)}$, that is
$$
\dfrac{1}{\rho((k+1)T)}=\exp\left(-\int_{kT}^{(k+1)T}(a(s,x_m)+\Sigma(s))ds\right)\left(\dfrac{1}{\rho(kT)}+\dis\int_{kT}^{(k+1)T}e^{\int_{kT}^s(a(\theta,x_m)+\Sigma(\theta))d\theta}ds\right),
$$
and we obtain a recurrent sequence for $\rho_k=\rho(kT)$ as follows
$$
\dfrac{1}{\rho_{k+1}}=\xi_k+\dfrac{\eta_k}{\rho_k},
$$
where
$$
\eta_k=\exp\left(-\int_{kT}^{(k+1)T}(a(s,x_m)+\Sigma(s))ds\right),\quad\xi_k=\eta_k\dis\int_{kT}^{(k+1)T}\exp\left(\int_{kT}^s(a(\theta,x_m)+\Sigma(\theta))d\theta\right)ds.
$$
From the $T-$periodicity of $a$ and the fact that $\Sigma(t)\rightarrow0$ we obtain easily that $\eta_k\rightarrow \eta$ and $\xi_k\rightarrow \xi$ as $k\rightarrow\infty$, where
$$
\eta=\exp\left(-\int_0^Ta(t,x_m)dt\right),\quad\xi=\eta\int_0^T\exp\left(\int_0^ta(\theta,x_m)d\theta\right)dt.
$$
From these convergences we have that for all $\epsilon>0$, there exists $K_\epsilon$ such that
$$
\xi-\epsilon\leq \xi_k\leq \xi+\epsilon,\qquad\eta-\epsilon\leq \eta_k\leq \eta+\epsilon,\quad \forall\;k\geq K_\epsilon,
$$
which implies
$$
\xi-\epsilon+\frac{\eta-\epsilon}{\rho_k}\leq\frac{1}{\rho_{k+1}}\leq \xi+\epsilon+\frac{\eta+\epsilon}{\rho_k}.
$$
Note $\kappa_k=\dfrac{1}{\rho_k}$ then 
\begin{equation}
\label{Eq_kappa}
\xi-\epsilon+(\eta-\epsilon)\kappa_k\leq\kappa_{k+1}\leq \xi+\epsilon+(\eta+\epsilon)\kappa_k.
\end{equation}
From the  inequality  at the right hand side of \eqref{Eq_kappa}, denoting $\kappa^*=\dis\limsup_{k\rightarrow+\infty} \kappa_k$, we obtain
$$
\kappa^*\leq \xi+\epsilon+(\eta+\epsilon)\kappa^*,\qquad \forall\epsilon>0.
$$
Then thanks to assumption \eqref{x_m}, which implies $\eta<1$, we have
$$
\kappa^*\leq \frac{\xi}{1-\eta}.
$$
Analogously, from the left hand side inequality in \eqref{Eq_kappa}, and denoting $\kappa_*=\dis\liminf_{k\rightarrow+\infty} \kappa_k$, we deduce that  
$$
\kappa_*\geq \frac{\xi}{1-\eta}.
$$
Since $\kappa_*\leq\kappa^*$, we obtain
$$
\kappa^*=\kappa_*=\lim_{k\rightarrow+\infty}\kappa_k=\frac{\xi}{1-\eta}.
$$
Going back to variable $\rho_k$, it implies
$$
\lim_{k\rightarrow\infty} \rho_k=\dfrac{1-\eta}{\xi}.
$$
Finally we can make a translation from $\rho_0$ to obtain 
$$
\widetilde{\varrho}(t)=\lim_{k\rightarrow\infty}\rho(kT+t),
$$
with $\widetilde{\varrho}(t)$ the unique periodic solution of equation \eqref{SysRhoPer}
given by \eqref{Exp_rho}.
\begin{flushright}
$\blacksquare$
\end{flushright}
\end{proof}
Finally we prove Lemma \ref{TechPropSect2}.\\
\begin{proof}
Let $K=\{x\in\R^d:|x|\leq R_0\}$ for $R_0$ as in assumption \eqref{a_neg} then 
$$
\begin{array}{l}
\dis\int_{\R^d}\dfrac{n(t,x)a(t,x)}{\rho(t)}dx= \dfrac{\dis\int_{K^c}n(t,x)a(t,x)dx+\int_{K}p_0(t,x)e^{t\mu(x)}a(t,x)dx}{\dis\int_{K^c}n(t,x)dx+\int_{K}p_0(t,x)e^{t\mu(x)}dx}.
\end{array}
$$
Thanks to \eqref{model_sigma0} and assumptions \eqref{a_W3inf}, \eqref{n0_exp} and \eqref{a_neg}  we can control the integral terms taken outside the compact set $K$ as follows
\begin{equation}
\label{Bound_n_outCompact}
\dis\int_{K^c}n(t,x)a(t,x)dx\leq \Vert a\Vert_{L^\infty}e^{-\delta t}\dis\int_{K^c}n_0(x)dx\leq C e^{-\delta t} \int_{{K}^c}e^{-C_2|x|}dx\longrightarrow 0,\quad\mathrm{as}\: t\rightarrow\infty,
\end{equation}
and an analogous inequality holds for $\int_{K^c}n(t,x)dx$. Next for the remaining terms, we use Taylor expansions around the point $x=x_m$  until third order terms, for $x_m$ given by \eqref{x_m}, that is,
$$
\begin{array}{rcl}
I(t)&=&\dis\int_{K}p_0(t,x)e^{t\mu(x)}a(t,x)dx\\\\
&=&\dis\int_{K}\left[a(t,x_m)+\nabla a(t,x_m)(x-x_m)+\frac{1}{2}\,^t(x-x_m)D^2a(t,x_m)(x-x_m)+O(|x-x_m|^3)\right]\\
&&\quad\cdot\dis\left[p_0(t,x_m)+\nabla p_0(t,x_m)(x-x_m)+\frac{1}{2}\,^t(x-x_m)D^2a(t,x_m)(x-x_m)+O(|x-x_m|^3)\right]\\
&&\;\cdot\mathrm{exp}\left\{\dis\frac{t}{2}\,^t(x-x_m)D^2 \mu(x_m)(x-x_m)+tO(|x-x_m|^3)\right\}dx,
\end{array}
$$
where $^t x$ indicates the transpose vector of $x$.\\
We organize $I(t)$ by powers of $|x-x_m|$ as below
$$
\begin{array}{lll}
I_0(t)&=&a(t,x_m)p_0(t,x_m)\dis\int_{K} e^{\frac{t}{2} \,^t(x-x_m)D^2 \mu(x_m)(x-x_m)}dx,\\
I_1(t)&=&\dis\int_{K}\left[a(t,x_m)\nabla p_0(t,x_m)+p_0(t,x_m)\nabla a(t,x_m)\right] (x-x_m)e^{\frac{t}{2}\,^t(x-x_m)D^2 \mu(x_m)(x-x_m)}dx=0,\\
I_2(t)&=&\dis\int_{K}\left\{\ ^t(x-x_m)\left[\frac{1}{2}a(t,x_m)D^2p_0(t,x_m)+\,^t\nabla a(t,x_m)\nabla p_0(t,x_m)+\frac{1}{2}p_0(t,x_m)D^2 a(t,x_m)\right] (x-x_m)\right.\\
&&\qquad\left. \cdot\ e^{\frac{t}{2}\,^t(x-x_m)D^2 \mu(x_m)(x-x_m)}\right\}dx,\\
I_3(t)&=&\dis\int_{K} (1+t)O(|x-x_m|^3)e^{\frac{t}{2}\,^t(x-x_m)D^2 \mu(x_m)(x-x_m)}dx.
\end{array}
$$
By performing a change of variables as $y=\sqrt{t}(x-x_m)$ we obtain for the non null integrals
$$
\begin{array}{lll}
I_0(t)&=&\dis\frac{1}{t^{d/2}}a(t,x_m)p_0(t,x_m)\int_{\widetilde{K}_t} e^{\frac{1}{2} \,^tyD^2 \mu(x_m)y}dy,\\\\
I_2(t)&=&\dis\frac{1}{t^{d/2+1}}\int_{\widetilde{K}_t}\ ^ty\left[\frac{1}{2}a(t,x_m)D^2p_0(t,x_m)+\ ^t\nabla a(t,x_m)\nabla p_0(t,x_m)+\frac{1}{2}p_0(t,x_m)D^2 a(t,x_m)\right] ye^{\frac{1}{2}\,^tyD^2 \mu(x_m)y}dy,\\\\
I_3(t)&=&\dis\frac{1+t}{t^{\frac{d+3}{2}}}\int_{\widetilde{K}_t} O(|y|^3)e^{\frac{1}{2}\,^tyD^2 \mu(x_m)y}dy\approx O\left(\frac{1}{t^{\frac{d+1}{2}}}\right),
\end{array}
$$
with $\widetilde{K}_t=\{x\in\R^d:|x|\leq \sqrt{t}R_0\}$.\\
Note that $I_1(t)=0$ because it is the integral of an odd function in a symmetric interval. Moreover we obtain the approximation for $I_3(t)$ thanks to assumption \eqref{n0_W3inf}, which implies that the derivatives of $\mu$ and $a$, and consequently $p_0$, up to order 3, are globally bounded.\\
Moreover, if we denote $A(t)=\Big(\alpha_{ij}(t)\Big)_{i,j}$ the periodic matrix inside the crochets in $I_2(t)$, i.e
$$
A(t)=\frac{1}{2}a(t,x_m)D^2p_0(t,x_m)+\,^t\nabla a(t,x_m)\nabla p_0(t,x_m)+\frac{1}{2}p_0(t,x_m)D^2 a(t,x_m),
$$
we obtain, thanks to the periodicity of $a$ and $p_0$, that all the coefficients of $A(t)$ are bounded as $t\rightarrow\infty$.
Moreover,
$$
\left|\,^tyA(t)y\right|=\left|\sum_{i,j=1}^d\alpha_{ij}(t)y_iy_j\right|\leq \sum_{i,j=1}^d|\alpha_{ij}(t)||y_i||y_j|\leq C|y|^2,\ \text{for some}\ C>0.
$$
Then for $I_2$ we have, by using \eqref{Hessian}
$$
\left|I_2(t)\right|\leq \dis\frac{C}{t^{d/2+1}}\int_{\widetilde{K}_t} |y|^2e^{\frac{1}{2}\,^tyD^2 \mu(x_m)y}dy\approx O\left(\frac{1}{t^{d/2+1}}\right)\qquad\mathrm{as}\qquad t\rightarrow\infty.
$$
By arguing in the same way we obtain for the denominator term
$$
\int_{K}p_0(t,x)e^{t\mu(x)}dx= \frac{1}{t^{d/2}}p_0(t,x_m)\dis\int_{\widetilde{K}_t}e^{\frac{1}{2}\,^tyD^2 \mu(x_m)y}dy+O\left(\frac{1}{t^{\frac{d+1}{2}}}\right),
$$
and we conclude multiplying by $t^{d/2}$ and using again \eqref{Hessian}
$$
\dis\int_{\R^d}\dfrac{n(t,x)a(t,x)}{\rho(t)}dx=\dfrac{a(t,x_m)p_0(t,x_m)\dis\int_{\widetilde{K}_t}e^{\frac{y^2}{2}D^2\mu(x_m)}dy+O\left(\frac{1}{\sqrt{t}}\right)}{p_0(t,x_m)\dis\int_{\widetilde{K}_t}e^{\frac{y^2}{2}D^2\mu(x_m)}dy+O\left(\frac{1}{\sqrt{t}}\right)}=a(t,x_m)+O\left(\frac{1}{\sqrt{t}}\right),
$$
for $t$ large enough.
\end{proof}
\begin{flushright}
$\blacksquare$
\end{flushright}

\subsection{Convergence to a Dirac mass}
\label{ConvDirac}
In this subsection we prove Proposition \ref{PrinTheo}  $(ii)$.\\
\begin{proof}(ii)\\
We begin by defining
$$
f(t,x)=\frac{n(t,x)}{\rho(t)}=\frac{p_0(t,x)e^{\mu(x)t}}{\dis\int_{\R^d} p_0(t,x)e^{\mu(x)t}dx}.
$$
Therefore, since $\dis\int_{\R^d}f(t,x)dx=1$, there exists a sub-sequence $(f_{t_k})$ that converges weakly to a measure $\nu$, i.e.
$$
\int_{\R^d} f_{t_k}\varphi dx\rightarrow\int_{\R^d} \nu\varphi dx\qquad
\forall\; \varphi\in C_c(\R^d).
$$
We first prove
\begin{equation}
\label{SubseqConv}
\int_{\Omega_{\zeta}^c}\frac{n(t_k,x)}{\rho(t_k)}\varphi(x)dx\longrightarrow 0 \quad \mathrm{as}\quad t_k\rightarrow\infty \quad \forall\;\varphi: \mathrm{supp}\;\varphi\subset \Omega_{\zeta}^c,\quad\mathrm{where}\quad\Omega_{\zeta}=\{x\in\R^d:|x-x_m|<\zeta\}.
\end{equation}
We can rewrite the above integral as below
$$
\int_{\Omega_{\zeta}^c} f(t,x)\varphi(x) dx=\dfrac{1}{\mathcal{I}(t)}\int_{\Omega_{\zeta}^c}p_0(t,x)e^{\mu(x)t}\varphi(x)dx ,
$$
where
$$
\mathcal{I}(t)=\int_{\R^d}p_0(t,y)e^{\mu(y)t}dy.
$$
We estimate $\mathcal{I}(t)$ using the Laplace's method for integration and the assumption \eqref{Hessian}. It follows
$$
\mathcal{I}(t)\sim \frac{e^{t\mu(x_m)}p_0(t,x_m)}{\sqrt{|\det H|}}\left(\frac{2\pi}{t}\right)^{d/2}\qquad \mathrm{as}\quad t\rightarrow\infty,
$$
with $\mu(x_m)$ the strict maximum that is attained at a single point thanks to assumption \eqref{x_m}, and $H$ given by \eqref{Hessian}. Since $p_0(t,x)$ is positive and periodic with respect to $t$, there exist positive constants $K_1$, $K_2$ such that
$$
K_1\frac{e^{t\mu(x_m)}}{t^{d/2}}\leq \mathcal{I}(t)\leq K_2\frac{e^{t\mu(x_m)}}{t^{d/2}}.
$$
Next we note that
$$
t^{d/2}e^{-t\mu(x_m)}\int_{\Omega_{\zeta}^c}p_0(t,x)e^{\mu(x)t}\varphi(x)dx\longrightarrow0,\quad\text{as}\ t\rightarrow+\infty,
$$
since $\mu(x)-\mu(x_m)\leq -\beta$ for some $\beta>0$, and $\varphi$ has compact support, which immediately implies \eqref{SubseqConv}.\\
We deduce from \eqref{SubseqConv} by letting $\zeta\rightarrow 0$, that as $t\rightarrow+\infty$ along subsequences
$$
\frac{n(t,x)}{\rho(t)}\rightharpoonup\omega\delta(x-x_m).
$$
We then prove that $\omega=1$, and hence all the sequence converges to the same limit.\\\\
Let $K_R=\{x\in\R^d:|x|\leq R\}$, for $R>0$.\\
We can write using \eqref{model_sigma0} that
\begin{equation}
\label{Exp_n_n0}
n(t,x)=n_0(x)e^{\int_0^t(a(s,x)-\rho(s))ds}.
\end{equation}
Thanks to assumption \eqref{n0_exp} and \eqref{a_neg},  for $R_0\leq R$, by making an analogous analysis to \eqref{Bound_n_outCompact} we obtain
$$
\int_{K_R^c}n(t,x)dx\rightarrow0\quad\text{as}\quad t\rightarrow+\infty.
$$
Moreover, thanks to Section 2.1, we know that $\rho$ converges to $\widetilde{\varrho}$, a periodic and positive function. Therefore, in long time, $\rho$ is bounded from below and above by positive constants. We deduce that
$$
\int_{K_R^c}f(t,x)dx=\int_{K_R^c}\frac{n(t,x)}{\rho(t)}dx\rightarrow0\quad\mathrm{as}\quad t\rightarrow\infty.
$$
Thanks to the above convergence and the fact that $\dis\int_{\R^d} f(t,x)dx=1$, we deduce that $\forall\zeta>0$ there exists a compact set $K$ and $t_0>0$ such that, for all $t\geq t_0$
$$
1-\zeta\leq \int_K f(t,x)dx.
$$
Moreover, we know that $f$ converges weakly to a measure $\omega\delta(x-x_m)$, thus choosing a smooth compactly supported function $\varphi$ such that $\varphi(x)=1$ if $x\in K$, $\varphi(x)=0$ if $x\in (K')^c$ for another compact $K'$ such that $K\varsubsetneq K'$ and $0<\varphi(x)<1$ for $x\in K'\setminus K$, we obtain
$$
\int_{\R^d}f(t,x)\varphi(x)dx=\int_{K}f(t,x)dx+\int_{K^c}f(t,x)\varphi(x)dx\quad\longrightarrow\omega,
$$
where the first term in the RHS is bigger than $1-\zeta$ and the second one is positive. It follows that $1-\zeta\leq \omega$, for all $0<\zeta<1$ and hence $\omega=1$. We conclude that
$$
\frac{n(t,x)}{\rho(t)}\rightharpoonup\delta(x-x_m)\quad\mathrm{as}\quad t\rightarrow+\infty,
$$
which implies, using the convergence result for $\rho$,
$$
n(t,x)-\widetilde{\varrho}(t)\delta(x-x_m)\rightharpoonup 0\quad\mathrm{as}\quad t\rightarrow+\infty,
$$
weakly in the sense of measures.
\begin{flushright}
$\blacksquare$
\end{flushright}
\end{proof}
\section{The case with mutations: long time behavior}
\label{SigmaPositive}
In this section we study \eqref{CompleteModel} with $\sigma>0$ and provide the proof of Proposition \ref{PrinTheoSect2}.\\
To this end, we first introduce a linearized problem. Let $n$ solve \eqref{CompleteModel}, we define $m(t,x)=n(t,x)e^{\int_0^t\rho(s)ds}$ which solves
\begin{equation}
\label{modelLap_in_m}
\left\{
\begin{array}{rrll}
\partial_t m(t,x)-\sigma\Delta m(t,x) &=& m(t,x)a(t,x),&(t,x)\in[0,+\infty)\times\R^d,\\
m(t=0,x)&=&n_0(x),
\end{array}
\right.
\end{equation}
and associate to \eqref{modelLap_in_m} the parabolic eigenvalue problem \eqref{EigenPb}. In subsection 3.1, we provide a convergence result for \eqref{modelLap_in_m}. Next, using this property, we prove Proposition  \ref{PrinTheoSect2} in subsection 3.2.
\subsection{A convergence result for the linearized problem}
In this section we provide a convergence result for the linearized problem.
\begin{lemma}
\label{existence_eigen}
Assume \eqref{a_W3inf}, \eqref{n0_exp} and \eqref{a_lambda_neg}. Then,
\begin{itemize}
\item[(i)] there exists a unique principal eigenpair $(\lambda,p)$ for the problem \eqref{EigenPb}, with $p\in L^\infty(\R\times\R^d)$, up to nor\-ma\-li\-za\-tion of $p$. Moreover, the eigenfunction $p(t,x)$ is exponentially stable, i.e. there exist a constant $\alpha>0$ such that the solution $\overline{m}(t,x)$ to problem \eqref{modelLap_in_m} satisfies
\begin{equation}
\label{App_m}
\Vert \overline{m}(t,x)e^{\lambda t}-\alpha p (t,x)\Vert_{L^{\infty}(\R^d)}\rightarrow 0\quad\mathrm{as}\;t\rightarrow\infty,
\end{equation} 
exponentially fast.
\item[(ii)] Moreover, let $\delta$ and $R_0$ given by \eqref{a_lambda_neg}, then we have
\begin{equation}
\label{BoundSup_p}
p(t,x)\leq \Vert p\Vert_{L^\infty}e^{-\sqrt{\frac{\delta}{\sigma}}(|x|-R_0)},\;\quad \forall(t,x)\in[0,+\infty)\times\R^d.
\end{equation}
\end{itemize}
\end{lemma}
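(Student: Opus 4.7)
The plan is to establish (ii) first, since the resulting decay controls solutions at infinity and feeds back into the proofs of (i). Existence of an eigenpair has already been recalled in the paragraph preceding the lemma: the bounded-domain eigenpairs $(\lambda_R,p_R)$ of \eqref{EqLamdaR} produced by \cite{hess}, normalized by $\|p_R(0,\cdot)\|_{L^\infty(B_R)}=1$, admit a limit as $R\to\infty$ by parabolic Schauder estimates, yielding $(\lambda,p)$ with $p\in L^\infty$ solving \eqref{EigenPb}.

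For (ii), I would construct the explicit radial supersolution $\psi(x)=\|p\|_{L^\infty}e^{-\beta(|x|-R_0)}$ with $\beta=\sqrt{\delta/\sigma}$. Using $\Delta\psi=(\beta^2-(d-1)\beta/|x|)\psi$ together with \eqref{a_lambda_neg}, one checks, for $|x|>R_0$,
\begin{equation*}
\partial_t\psi-\sigma\Delta\psi-(a(t,x)+\lambda)\psi \geq -\sigma\beta^2\psi+\delta\psi+\sigma(d-1)\beta\psi/|x| = \sigma(d-1)\beta\psi/|x| \geq 0.
\end{equation*}
On $\{|x|=R_0\}$ one has $\psi=\|p\|_{L^\infty}\geq p$. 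A parabolic comparison on the annulus $\{R_0<|x|<R'\}$, applied to the auxiliary function $p-\psi-\|p\|_{L^\infty}e^{\beta(|x|-R')}$ to secure the outer-boundary inequality, is legitimate thanks to the $T$-periodicity in time and the fact that the zero-order coefficient $a+\lambda\leq -\delta<0$ forbids interior positive maxima in the periodic cylinder. Letting $R'\to\infty$ gives \eqref{BoundSup_p}.

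With the decay in hand, uniqueness in (i) proceeds by comparison. Any second bounded eigenpair $(\lambda',p')$ inherits the same exponential decay from (ii), and a standard sliding/Krein--Rutman argument applied to the function $c^* p - p'$, where $c^*$ is the smallest positive constant for which $p'\leq c^* p$ on $\overline{B_{R_0}}$ (the inequality extending globally thanks to the decay of both functions), forces $\lambda'=\lambda$ and $p'\equiv c^* p$ through the strong parabolic maximum principle. For the exponential stability \eqref{App_m}, I would introduce the period map $\Phi:\overline{m}(0,\cdot)\mapsto \overline{m}(T,\cdot)e^{\lambda T}$ acting on a weighted $L^\infty$ space whose weight matches the decay profile of $\psi$; applying the same supersolution construction to $\overline{m}e^{\lambda t}$ gives uniform tail control and hence compactness of $\Phi$. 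Krein--Rutman then supplies a simple leading eigenvalue $1$ with eigenvector $p(0,\cdot)$ and a spectral gap, and iterating in $k$ yields $\Phi^k n_0\to\alpha\,p(0,\cdot)$ at a geometric rate, from which \eqref{App_m} follows by propagation through each period.

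The main obstacle is the unboundedness of $\R^d$: classical principal-eigenvalue theory for periodic parabolic operators is developed on bounded domains, and the entire strategy hinges on using hypothesis \eqref{a_lambda_neg} through the explicit supersolution $\psi$ of (ii) to recover, in the unbounded setting, the comparison and compactness tools needed both for uniqueness and for the spectral gap underpinning the exponential stability.
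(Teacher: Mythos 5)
Your part (ii) is essentially the paper's argument (which follows Lemma 2.4 of \cite{Polacik}): the same supersolution $\Vert p\Vert_{L^\infty}e^{-\nu(|x|-R_0)}$ with $\nu=\sqrt{\delta/\sigma}$ is used. The paper works directly on the unbounded exterior region $\{t>t_0,\ |x|>R_0\}$ and handles the absence of initial data by adding the term $Me^{-\delta(t-t_0)}$ and letting $t_0\to-\infty$, whereas you truncate to an annulus and invoke time-periodicity together with the sign of $a+\lambda$; both mechanisms are legitimate. One computational slip: your outer corrector $\Vert p\Vert_{L^\infty}e^{\beta(|x|-R')}$ is \emph{not} a supersolution for $d\geq 2$, since $-\sigma\Delta$ applied to a radially increasing exponential produces the term $-\sigma(d-1)\beta/|x|$ with the wrong sign, so the inequality $-\sigma\beta^2+\delta\geq 0$ becomes $-\sigma\beta^2-\sigma(d-1)\beta/|x|+\delta\geq 0$, which fails at $\beta=\sqrt{\delta/\sigma}$. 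This is easily repaired by taking a slightly smaller exponent $\beta'$ with $\sigma\beta'^2+\sigma(d-1)\beta'/R_0\leq\delta$; the corrector still vanishes as $R'\to\infty$.

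The genuine gap is in part (i), specifically the exponential stability \eqref{App_m}. The paper does not prove this from scratch: it verifies hypothesis (H1) of \cite{huska08} for the functions $\widetilde{\phi}_R=p_Re^{t(\delta-\lambda_R+\lambda)}$ and then invokes Theorems 2.1, 2.2 and 9.1 of that reference (uniqueness of the positive entire solution and exponential separation for linear parabolic equations on $\R^N$). Your replacement — Krein--Rutman for the period map $\Phi$ on an exponentially weighted $L^\infty$ space — does not work on $\R^d$, for two reasons. First, $\Phi$ is not strongly positive in such a space: a solution of \eqref{modelLap_in_m} issued from compactly supported (or merely integrable) nonnegative data is controlled above and below by multiples of the heat kernel, hence decays like $e^{-c|x|^2}$ at time $T$, which is $o\bigl(e^{-\beta|x|}\bigr)$; thus $\Phi n_0$ never lies in the interior of the positive cone $\{u:\inf u/w>0\}$ of the weighted space, and Krein--Rutman in its strong form is unavailable. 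Second, $\Phi$ is not compact there: testing on $n_{0,R}=w\,\mathbf{1}_{\{|x|>R\}}$, which lies in the unit ball for every $R$, one finds that $\Phi n_{0,R}$ is bounded below by a fixed multiple of $w$ near $|x|\approx R$ while converging to $0$ locally uniformly as $R\to\infty$, so no subsequence converges in the weighted norm. These are exactly the obstructions that the exponential-separation / principal-Floquet-bundle theory of \cite{huska08} is built to circumvent, and they are why the paper routes the proof through that reference. As written, your argument for the spectral gap, and hence for \eqref{App_m}, does not close; a correct self-contained proof would require substantially more than tail control plus Krein--Rutman. The uniqueness part of your sliding argument is also delicate for the same reason (the infimum of $c$ with $p'\leq cp$ need not be attained at a finite point unless one first restricts to a compact set and then justifies the global extension), though that portion is repairable with the periodic maximum principle you already use in (ii).
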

\begin{proof}
\textit{The proof of (i).}\\ We will apply a result from \cite{huska08} to equation
\begin{equation}
\label{m_tilda}
\partial_t\widetilde{m}-\sigma\Delta \widetilde{m}=\widetilde{m}[a(t,x)+\lambda+\delta],\quad (t,x)\in\R\times\R^d,
\end{equation}
with $\delta$ given in assumption \eqref{a_lambda_neg}. This result allows to show that there exists a unique principal eigenpair for the equation \eqref{m_tilda}, with an eigenfunction which is exponentially stable.\\
Consider the problem
\begin{equation}
\label{LambdaR}
\left\{\begin{array}{rllc}
\partial_t\widetilde{\phi}_R-\sigma\Delta\widetilde{\phi}_R&=&\widetilde{\phi}_R[a+\lambda+\delta],&\mathrm{in}\;\R\times B_R,\\
\widetilde{\phi}_R&=&0,& \mathrm{on}\;\R\times\partial B_R.\\
\end{array}
\right.
\end{equation}
Thanks to \eqref{a_W3inf} and \eqref{a_lambda_neg} we can choose $R$ and $\delta>0$ such that there exists $d_\delta>0$
$$
\Vert a(t,x)+\lambda+\delta\Vert_{L^{\infty}([0,+\infty)\times B_R)}<d_\delta,\quad a(t,x)+\lambda+\delta<0, \quad\forall\;|x|\geq R_0.
$$
Note that $\widetilde{\phi}_R=p_Re^{t(\delta -\lambda_R+\lambda)}$ is a positive  entire solution  to \eqref{LambdaR}. Moreover, it satisfies the hy\-po\-the\-sis (H1) of \cite{huska08}, that is
$$
\dfrac{\Vert \widetilde{\phi}_R(t,\cdot)\Vert_{L^{\infty}(B_R)}}{\Vert \widetilde{\phi}_R(s,\cdot)\Vert_{L^{\infty}(B_R)}}=\dfrac{\Vert p_R(t,\cdot)\Vert_{L^{\infty}(B_R)}}{\Vert p_R(s,\cdot)\Vert_{L^{\infty}(B_R)}}e^{(\delta-\lambda_R+\lambda)(t-s)}\geq Ce^{(\delta-\lambda_R+\lambda)(t-s)},\quad t\geq s,
$$
with $\delta-\lambda_R+\lambda>0$ for $R$ large enough.\\
Therefore Theorem 2.1 (and its generalization Theorem 9.1) in \cite{huska08} implies that there exists a unique positive entire solution $\widetilde{\phi}$ for problem \eqref{m_tilda}, which is given by
$$
\widetilde{\phi}(t,x)=\lim_{R\rightarrow\infty}\widetilde{\phi}_R(t,x).
$$
Moreover, for $p=\widetilde{\phi}e^{-\delta t}$ we obtain
$$
p(t,x)=\lim_{R\rightarrow\infty}p_R(t,x),
$$
and  since $p_R$ is the solution of \eqref{EqLamdaR}, then $p$ is a positive periodic eigenfunction to \eqref{EigenPb}.\\
Furthermore, Theorem 2.2 in \cite{huska08} implies also that
$$
\frac{\Vert\widetilde{m}(t,x)-\alpha \widetilde{\phi}(t,x)\Vert_{L^\infty(\R^d)}}{\Vert \widetilde{\phi}(t,\cdot)\Vert_{L^{\infty}(\R^d)}}\longrightarrow0,
$$
exponentially fast as $t\rightarrow\infty$.\\
Noting that every solution $m$ of problem \eqref{modelLap_in_m} can be written as $m=\widetilde{m}e^{-\lambda t-\delta t}$, we obtain
$$
\Vert m(t,x)e^{\lambda t}-\alpha p(t,x)\Vert_{L^\infty(\R^d)}\longrightarrow0\quad\;\mathrm{as}\quad t\rightarrow+\infty,
$$
and this convergence is also exponentially fast.\\\\
\textit{The proof of (ii).}\\
Next we prove \eqref{BoundSup_p} following similar arguments as in the proof of Lemma 2.4 in \cite{Polacik}. Let $\widetilde{a}(t,x)=a(t,x)+\lambda$ then $p$ is a positive bounded solution of the following equation
\begin{equation}
\label{a_tilda}
\partial_t p-\sigma\Delta p=p\widetilde{a}(t,x),\quad\mathrm{in}\;\R\times \R^d.
\end{equation}
Note that we have defined $p$ in $(-\infty,0]$ by periodic prolongation. Let $\Vert p\Vert_{L^\infty(\R\times\R^d)}=M$. We define
$$
\zeta(t,x)=Me^{-\delta(t-t_0)}+Me^{-\nu(|x|-R_0)},
$$
where $\nu=\sqrt{\frac{\delta}{\sigma}}$ and $R_0$ is given by \eqref{a_lambda_neg}. One can verify that 
$$
M\leq \zeta(t,x)\quad\mathrm{if}\;|x|=R_0\;\mathrm{or}\;t=t_0.
$$
Furthermore if  $|x|>R_0$ or $t>t_0$ evaluating in \eqref{a_tilda} shows
$$
\partial_t\zeta-\sigma\Delta\zeta-\zeta\widetilde{a}(t,x)=Me^{-\delta(t-t_0)}(-\delta-\widetilde{a}(t,x))+Me^{-\nu(|x|-R_0)}\left(-\sigma\nu^2-\widetilde{a}(t,x)+\sigma\nu\frac{d-1}{|x|}\right)\geq 0,
$$
since $\widetilde{a}(t,x)\leq -\delta$ thanks to assumption \eqref{a_lambda_neg}. Thus $\zeta$ is a supersolution of \eqref{a_tilda} on
$$
Q_{0}=\{(t,x)\in (t_0,\infty)\times \R^d\;;|x|>R_0\},
$$
which dominates $p$ on the parabolic boundary of $Q_{0}$. Applying the maximum principle to $\zeta-p$, we obtain
$$
p(t,x)\leq Me^{-\delta(t-t_0)}+Me^{-\nu(|x|-R_0)},\qquad |x|\geq R_0,\;t\in(t_0,\infty).
$$
Taking the limit $t_0\rightarrow-\infty$ yields
$$
p(t,x)\leq Me^{-\nu(|x|-R_0)},\qquad |x|\geq R_0,\;t\leq+\infty,
$$
for $\nu=\sqrt{\frac{\delta}{\sigma}}$. We conclude that $p$ satisfies \eqref{BoundSup_p}.
\begin{flushright}
$\blacksquare$
\end{flushright}
\end{proof}

\subsection{The proof of Proposition \ref{PrinTheoSect2}}
To prove Proposition \ref{PrinTheoSect2} we first prove the following Lemmas.
\begin{lemma}
\label{bound_n}
Assume \eqref{a_W3inf} and \eqref{n0_exp} and let $C_3=\sigma C_2^2+d_0$ then the solution $n(t,x)$ to equation \eqref{CompleteModel} satisfies
$$
n(t,x)\leq \exp\left(C_1-C_2|x|+C_3t\right),\quad\forall(t,x)\in(0,+\infty)\times\R^d.
$$	 
\end{lemma}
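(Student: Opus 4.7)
The plan is to construct an explicit super-solution of the form $\overline n(t,x)=\exp(C_1-C_2|x|+C_3 t)$ with $C_3=\sigma C_2^2+d_0$, and then invoke a parabolic comparison principle on $[0,T^*]\times\R^d$. Since $a(t,x)-\rho(t)\le d_0$ (we have $|a|\le d_0$ from \eqref{a_W3inf}, and $\rho\ge 0$ because $n_0\ge 0$ and the equation preserves non-negativity via the standard Lipschitz argument on $n\mapsto n(a-\rho)$), it suffices to show that $\overline n$ is a super-solution of the linear inequality $\partial_t w-\sigma\Delta w\ge d_0\, w$ and that $\overline n(0,\cdot)\ge n_0$ on $\R^d$. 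The latter is immediate from \eqref{n0_exp}.

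For $x\neq 0$ a direct computation gives $\nabla\overline n=-C_2\tfrac{x}{|x|}\overline n$ and
\[
\sigma\Delta\overline n \;=\;\sigma C_2^2\,\overline n\;-\;\sigma C_2\,\tfrac{d-1}{|x|}\,\overline n,
\]
so that
\[
\partial_t\overline n-\sigma\Delta\overline n-d_0\,\overline n\;=\;\bigl(C_3-\sigma C_2^2-d_0\bigr)\overline n+\sigma C_2\,\tfrac{d-1}{|x|}\,\overline n\;=\;\sigma C_2\,\tfrac{d-1}{|x|}\,\overline n\;\ge\;0,
\]
using $C_3=\sigma C_2^2+d_0$ and $d\ge 1$. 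Hence $\overline n$ is a classical super-solution of the inequality satisfied by $n$ on $\{x\neq 0\}$.

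The only subtlety is the singularity of $|x|$ at the origin. I would handle it by a standard regularisation: replace $|x|$ by $\phi_\epsilon(x)=\sqrt{|x|^2+\epsilon^2}$ and work with
\[
\overline n_\epsilon(t,x)\;=\;\exp\bigl(C_1+\epsilon-C_2\phi_\epsilon(x)+(C_3+\epsilon)t\bigr).
\]
A short computation, using $|\nabla\phi_\epsilon|\le 1$ and $\Delta\phi_\epsilon\ge 0$, shows that $\overline n_\epsilon$ is a \emph{strict} super-solution of $\partial_t w-\sigma\Delta w=d_0 w$ on all of $[0,T^*]\times\R^d$, and that $\overline n_\epsilon(0,\cdot)>n_0$ strictly. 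Moreover $\overline n_\epsilon\to\overline n$ uniformly on compact sets as $\epsilon\to 0$.

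The main (and only) technical point is then the comparison principle on the unbounded domain $\R^d$. I would fix $T^*>0$ and apply the maximum principle on $[0,T^*]\times B_R$ to $w=n-\overline n_\epsilon$, using that $w$ satisfies a linear parabolic inequality with bounded zero-order coefficient $a(t,x)-\rho(t)$, and then let $R\to\infty$. To control the boundary contribution at $|x|=R$, I would use the exponential decay of $\overline n_\epsilon$ together with an a priori growth bound on $n$ (standard for non-negative solutions of parabolic equations with bounded reaction: $n$ grows at most like $e^{C(|x|^2+t)}$), so that $w(t,x)\to -\infty$ as $|x|\to\infty$ uniformly in $t\in[0,T^*]$ and the maximum of $w$ is attained in the interior or at $t=0$. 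This yields $n\le \overline n_\epsilon$ on $[0,T^*]\times\R^d$, and letting $\epsilon\to 0$ and $T^*\to\infty$ gives the announced bound.
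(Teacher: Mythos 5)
Your strategy is the same as the paper's: take $\overline n(t,x)=\exp(C_1-C_2|x|+C_3t)$ as an explicit super-solution, check $\partial_t\overline n-\sigma\Delta\overline n-d_0\overline n=\sigma C_2\frac{d-1}{|x|}\overline n\ge 0$, and compare. Your computation agrees with the paper's, and you are in fact more careful than the paper on one point: the paper only asserts the super-solution inequality ``a.e.'' and ignores the singularity of $|x|$ at the origin, whereas your regularisation by $\phi_\epsilon(x)=\sqrt{|x|^2+\epsilon^2}$ (with $|\nabla\phi_\epsilon|\le1$, $\Delta\phi_\epsilon\ge0$) handles it cleanly. (Minor bookkeeping: to get $\overline n_\epsilon(0,\cdot)>n_0$ you need $\epsilon-C_2(\phi_\epsilon(x)-|x|)>0$, which fails at $x=0$ when $C_2\ge1$; replace $C_1+\epsilon$ by $C_1+(1+C_2)\epsilon$.)

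The one genuine error is in your treatment of the comparison on the unbounded domain. You claim that $w=n-\overline n_\epsilon\to-\infty$ as $|x|\to\infty$; this is impossible, since $n\ge0$ and $\overline n_\epsilon\to0$ give $w\ge-\overline n_\epsilon\to0$, so $\liminf_{|x|\to\infty}w\ge0$. Worse, the a priori bound you invoke goes the wrong way: if all you know is $n\le Ce^{A(|x|^2+t)}$, then $w$ could a priori be large and positive on $\partial B_R$, and the lateral boundary term in the truncated maximum principle is not controlled. The correct repair uses exactly the ingredients you already have, assembled differently: $w$ satisfies $\partial_tw-\sigma\Delta w\le d_0w$ with $w(0,\cdot)\le0$ and $w\le n\le Ce^{A(|x|^2+t)}$, so the Phragm\'en--Lindel\"of maximum principle for parabolic operators with bounded zero-order coefficient (Protter--Weinberger, or Friedman) yields $w\le0$ directly on $[0,T^*]\times\R^d$, with no boundary-localisation argument. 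This is also what the paper's terse ``maximum principle in the class of $L^2$ functions'' is standing in for. With that substitution your proof is complete.
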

\begin{proof}
Define the function $\widetilde{n}(t,x)=\exp\left(C_1-C_2|x|+C_3t\right)$.\\
We prove that $n\leq\widetilde{n}$. To this end we proceed by a comparison argument. One can easily verify that for $C_3$ defined above, we have the following inequality
$$
\partial_t\widetilde{n} -\sigma\Delta \widetilde{n}-\left[a(t,x)+\rho(t)\right]\widetilde{n}
=e^{\left(C_1-C_2|x|+C_3t\right)}\left[C_3-\sigma C_2^2+\sigma\frac{C_2(d-1)}{|x|}-a(t,x)+\rho(t)\right]
\geq 0,\quad \text{a.e in } \R\times\R^d.
$$
Moreover, we have for $t=0$, $n(0,x)\leq \widetilde{n}(0,x)$ thanks to assumption \eqref{n0_exp}. We can then apply a Maximum Principle, in the class of $L^2$ functions, and we conclude that
$$
n(t,x)\leq\widetilde{n}(t,x),\quad\forall(t,x)\in(0,+\infty)\times\R^d.
$$
\begin{flushright}
$\blacksquare$
\end{flushright}
\end{proof}
\begin{lemma}
\label{TechProp}
Assume \eqref{a_W3inf}, \eqref{n0_exp} and \eqref{a_lambda_neg} then 
$$
\left\vert\int_{\R^d}\frac{n(t,x)a(t,x)}{\rho(t)}dx -Q(t)\right\vert\longrightarrow 0,\quad\text{as }t\rightarrow+\infty,
$$
\end{lemma}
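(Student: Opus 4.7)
The strategy is to reduce the claim to the $L^\infty$ convergence $\widetilde{m}(t,x):=m(t,x)e^{\lambda t}\to\alpha p(t,x)$ furnished by Lemma \ref{existence_eigen}, where $m(t,x)=n(t,x)e^{\int_0^t\rho(s)\,ds}$ solves \eqref{modelLap_in_m}. Cancellation of the exponential factor gives
\[
\int_{\R^d}\frac{n(t,x)\,a(t,x)}{\rho(t)}\,dx
=\frac{\int_{\R^d}\widetilde{m}(t,x)\,a(t,x)\,dx}{\int_{\R^d}\widetilde{m}(t,y)\,dy},
\]
whereas $Q(t)$ is the analogous quotient with $\widetilde{m}$ replaced by $p$. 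The denominator $\int_{\R^d}p(t,y)\,dy$ is bounded above by \eqref{BoundSup_p} and bounded away from $0$ by positivity, continuity and $T$-periodicity of $p$, so the conclusion will follow from the two limits
\[
\int_{\R^d}\bigl(\widetilde{m}(t,x)-\alpha p(t,x)\bigr)a(t,x)\,dx\longrightarrow 0,
\qquad
\int_{\R^d}\bigl(\widetilde{m}(t,y)-\alpha p(t,y)\bigr)\,dy\longrightarrow 0,
\]
as $t\to\infty$, combined with the elementary identity
\[
\frac{N}{D}-\frac{N_p}{D_p}=\frac{(N-\alpha N_p)\,D_p-N_p\,(D-\alpha D_p)}{D\,D_p},
\]
in which $N,D$ and $N_p,D_p$ denote the numerators and denominators involving $\widetilde{m}$ and $p$ respectively.

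The main obstacle is that Lemma \ref{existence_eigen}(i) only provides $L^\infty$ convergence on the unbounded domain $\R^d$, which does not by itself imply convergence of these integrals. To bridge this gap, I would establish a time-uniform exponential spatial decay
\[
\widetilde{m}(t,x)\le C e^{-\beta(|x|-R_0)},\qquad t\ge 0,\ |x|\ge R_0,
\]
with $\beta:=\min(C_2,\sqrt{\delta/\sigma})$ and some $C>0$. This is obtained by a comparison argument in the spirit of the proof of \eqref{BoundSup_p}: $\widetilde{m}$ satisfies the same linear equation $\partial_t w-\sigma\Delta w=(a+\lambda)w$ as $p$, and the stationary profile $\zeta(x):=Ce^{-\beta(|x|-R_0)}$ is a supersolution on $\{|x|>R_0\}$ since $\sigma\beta^2\le\delta$ and $a+\lambda\le-\delta$ there by \eqref{a_lambda_neg}. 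Choosing $C$ large enough, $\zeta$ dominates $\widetilde{m}(0,x)=n_0(x)$ on $\{|x|\ge R_0\}$ (using \eqref{n0_exp} and $\beta\le C_2$) and $\widetilde{m}(t,\cdot)$ on $\{|x|=R_0\}$ for all $t\ge 0$ (using Lemma \ref{existence_eigen}(i) for $t$ large and Lemma \ref{bound_n} together with the logistic bound $\rho(t)\le\max(\rho(0),d_0)$ derived from integrating \eqref{CompleteModel} for $t$ bounded), so the parabolic maximum principle on the exterior cylinder yields the claimed decay.

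With this bound in hand and $\|a(t,\cdot)\|_\infty\le d_0$, I split each integral $\int_{\R^d}=\int_{B_R}+\int_{B_R^c}$: on $B_R^c$, both $\widetilde{m}(t,\cdot)$ and $\alpha p(t,\cdot)$ are dominated by a fixed multiple of $e^{-\beta(|\cdot|-R_0)}$, so the tail contributions are bounded by $o_R(1)$ uniformly in $t$; on $B_R$, Lemma \ref{existence_eigen}(i) combined with the finite volume gives convergence to $0$ as $t\to\infty$ for any fixed $R$. Sending first $t\to\infty$ and then $R\to\infty$ establishes both limits above, and feeding them into the quotient identity, together with $D(t)\to\alpha D_p(t)>0$ and $N_p(t)$ bounded, concludes the proof.
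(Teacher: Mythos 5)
Your proof is correct, and it takes a genuinely different route from the paper's. The paper never establishes a time-uniform spatial decay for $\widetilde m(t,x)=n(t,x)e^{\int_0^t\rho\,ds+\lambda t}$; instead it splits the integrals over a \emph{growing} compact set $K_t=\{|x|\le At\}$ and its complement. On $K_t$ the exponentially small error $\Sigma(t,\cdot)=\widetilde m(t,\cdot)-\alpha p(t,\cdot)$ beats the polynomially growing volume $|K_t|$, and on $K_t^c$ the crude pointwise bound of Lemma \ref{bound_n}, $n(t,x)\le e^{C_1-C_2|x|+C_3t}$, combined with the exponentially growing scalar $e^{\int_0^t\rho\,ds+\lambda t}$, is beaten by $e^{-C_2|x|}$ once $|x|>At$ with $A$ chosen large enough. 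You replace this ``sliding cutoff'' by first proving, via a stationary supersolution $\zeta(x)=Ce^{-\beta(|x|-R_0)}$ with $\beta=\min(C_2,\sqrt{\delta/\sigma})$ and the parabolic maximum principle on the exterior cylinder, a time-uniform exponential tail for $\widetilde m$ (mimicking the proof of \eqref{BoundSup_p}), and then take limits over a \emph{fixed} ball $B_R$: $t\to\infty$ first, then $R\to\infty$. Both arguments are sound; what your version buys is a reusable uniform decay estimate for $\widetilde m$ and a cleaner $\epsilon/2$-type limit interchange that avoids having to tune the growth rate $A$ of the cut-off against the constant $M\ge\rho_M+\lambda+C_3$, at the price of an extra comparison lemma. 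One small caveat to make explicit in a full write-up: to get the supremum bound on the lateral boundary $\{|x|=R_0,\ t\ge0\}$ you stitch together the $L^\infty$ bound from Lemma \ref{existence_eigen}(i) for large $t$ with the Lemma \ref{bound_n} bound for a compact initial time interval; this is fine, but the constant $C$ in $\zeta$ then depends on that splitting, and the maximum principle on the exterior of the ball requires noting that both $\widetilde m(t,\cdot)$ and $\zeta$ decay at spatial infinity.
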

with $Q(t)$ given by \eqref{PyQ}.\\
\begin{proof}
From \eqref{App_m}, we obtain that 
$$
n(t,x)e^{\int_0^t\rho(s)ds+\lambda t}=\alpha p(t,x)+\Sigma(t,x),
$$
with $\Vert\Sigma(t,x)\Vert_{L^\infty}\rightarrow0$ exponentially fast, as $t\rightarrow\infty$.\\
We define the compact set $K_t=\{x\in\R^d:|x|\leq At\}$, for some $A>>1$ large enough and compute
$$
\frac{1}{\rho(t)}\int_{\R^d}n(t,x)a(t,x)dx=\dfrac{\dis\int_{K_t}\alpha p(t,x)a(t,x)dx+\int_{K_t}\Sigma(t,x) a(t,x)dx+\int_{K_t^c}\left(\alpha p(t,x)+\Sigma(t,x)\right)a(t,x)\;dx}{\dis\int_{K_t}\alpha p(t,x)dx+\int_{K_t}\Sigma(t,x) dx+\int_{K_t^c}\left(\alpha p(t,x)+\Sigma(t,x)\right)\;dx}.
$$
We then notice that 
$$
\left\vert\dis\int_{K_t}\Sigma(t,x) a(t,x)dx\right\vert\leq \Vert a\Vert_{L^\infty}\Vert\Sigma(t,\cdot)\Vert_{L^\infty}|K_t|\rightarrow 0\quad\mathrm{as}\;t\rightarrow\infty,
$$
since $\Vert\Sigma(t,\cdot)\Vert_{L^\infty}$ converges exponentially fast to zero and the measure of $K_t$ is at most algebraic in $t$. Making the same analysis for $\left\vert\int_{K_t}\Sigma(t,x)dx\right\vert$ it will just remain to prove that the integral terms taken outside the compact set $K_t$ vanish as $t\rightarrow+\infty$.\\
We have, trivially
$$
\left\vert\dis\int_{K_t^c}(\alpha p(t,x)+\Sigma(t,x))a(t,x)dx\right\vert\leq \Vert a\Vert_{L^\infty}\left\vert\int_{K_t^c}n(t,x)e^{\int_0^t(\rho(s)+\lambda)ds}dx\right\vert.
$$
Then we use Lemma \ref{bound_n} to obtain
$$
\int_{K_t^c}n(t,x)e^{\int_0^t(\rho(s)+\lambda)ds}dx\leq \int_{K_t^c}e^{C_1-C_2|x|+Mt}dx\leq e^{C_1+M t}\int_{K_t^c}e^{-C_2|x|}dx\rightarrow 0,\quad\mathrm{as}\;t\rightarrow+\infty.
$$
for $A>M$ large enough, where $M\geq \rho_M+\lambda+C_3$.\\
Combining the last two inequalities we obtain that the integral terms taken outside the compact, vanish as $t\rightarrow+\infty$. This concludes the proof.
\begin{flushright}
$\blacksquare$
\end{flushright}
\end{proof}
\textbf{Proof of Proposition \ref{PrinTheoSect2}}\\\\
\textbf{Convergence of $\rho$.}\\
By integrating equation \eqref{CompleteModel} in $x$, we obtain that
$$
\int_{\R^d}\partial_t n(t,x)dx=\int_{\R^d}n(t,x)[a(t,x)-\rho(t)]dx,
$$
and using Lemma \ref{TechProp} we deduce that
$$
\dfrac{d\rho}{dt}=\rho(t)\left[\dis\int_{\R^d}\frac{n(t,x)a(t,x)}{\rho(t)}dx-\rho(t)\right]=\rho(t)\left[Q(t)+\Sigma'(t)-\rho(t)\right],
$$
where $\Sigma'(t)\rightarrow0$ exponentially as $t\rightarrow\infty$, and $Q(t)$ is given by \eqref{PyQ}.\\
Following similar arguments as in the proof of Proposition \ref{PrinTheo} we obtain $\vert\rho(t)-\widetilde{\rho}(t)\vert\rightarrow0$ as $t\rightarrow\infty$, with $\widetilde{\rho}$ the unique solution of 
$$
\left\{\begin{array}{l}
\dfrac{d\widetilde{\rho}}{dt}=\widetilde{\rho}(t)\left[Q(t)-\widetilde{\rho}(t)\right],\\
\widetilde{\rho}(0)=\widetilde{\rho}(T),
\end{array}\right.
$$
provided $\dis\int_0^TQ(t)dt>0$. Moreover if $\dis\int_0^TQ(t)dt\leq0$, then $\rho(t)\rightarrow0$ as $t\rightarrow\infty$. Note also that 
\begin{equation}
\label{lambda}
\lambda=-\frac{1}{T}\int_0^TQ(t)dt.
\end{equation}
We consider, indeed, the eigenvalue problem \eqref{EigenPb} and integrate it in $x\in\R^d$
$$
\partial_t\int_{\R^d}p(t,x)dx=\int_{\R^d}a(t,x)p(t,x)dx+\lambda\int_{\R^d}p(t,x)dx.
$$
We divide by $\dis\int_{\R^d}p(t,x)dx$ and integrate now in $t\in[0,T]$, to obtain
$$
\dis\int_0^T\dfrac{\partial_t\int_{\R^d}p(t,x)dx}{\int_{\R^d}p(t,x)dx}dt=\int_0^TQ(t)dt+\lambda T,
$$
which implies that
$$
0=\ln\left(\int_{\R^d}p(T,x)dx\right)-\ln\left(\int_{\R^d}p(0,x)dx\right)=\int_0^TQ(t)dt+\lambda T,
$$
and hence \eqref{lambda}.\\
This ends the proof of statements $(i)-(ii)$ of Proposition \ref{PrinTheoSect2}.\\\\
\textbf{Convergence of $\dfrac{n}{\rho}$.}\\
Let $K_t=\{x\in\R^d:|x|<At\}$, for $A>R_0$, as in the proof of Lemma \ref{TechProp}, we can write
$$
\begin{array}{rll}
\dfrac{n(t,x)}{\rho(t)}&=&\dfrac{\alpha p(t,x)+\Sigma(t,x)}{\dis\int_{K_t}\left(\alpha p(t,x)+\Sigma(t,x)\right)dx+\int_{K_t^c}\left(\alpha p(t,x)+\Sigma(t,x)\right)dx}.
\end{array}
$$
Following similar arguments as in Lemma \ref{TechProp} we obtain that 
$$
\left\Vert \dfrac{n(t,x)}{\rho(t)}-P(t,x)\right\Vert_{L^{\infty}}\longrightarrow0,
$$ 
with $P(t,x)$ as in \eqref{PyQ}.\\
Consequently, when $\lambda<0$ we obtain that 
$$
\Vert n(t,\cdot)-\widetilde{\rho}(t)P(t,\cdot)\Vert_{L^\infty}\longrightarrow 0\qquad\mathrm{as}\;t\rightarrow\infty,
$$
and this concludes the proof of $(iii)$.
\begin{flushright}
$\blacksquare$
\end{flushright}
\section{Case $\sigma<<1$. Small mutations}
\label{SmallMutations}
In this section we choose $\sigma=\varepsilon^2$ and we prove that for $\varepsilon$ small enough, the principal eigenvalue $\lambda$ given in \eqref{EigenPb} is negative. As a consequence, thanks to Proposition \ref{PrinTheoSect2}, any solution of \eqref{Pb_n_Epsilon1} converges to the unique periodic solution $(n_\varepsilon,\rho_\varepsilon)$. Next, we prove Theorem \ref{Prin_Theo_ueps}, which allows to characterize $n_\varepsilon$, as $\varepsilon\rightarrow0$.\\
Consider now the problem \eqref{Pb_n_Epsilon1}
and let $(\lambda_\varepsilon,p_\varepsilon)$ be the eigenelements of problem \eqref{EigenPb} for $\sigma=\varepsilon^2$, then we have the following result.
\begin{lemma}
\label{lam_eps_neg}
Under assumption \eqref{x_m} there exists $\lambda_m>0$ and $\varepsilon_0>0$ such that for all $\varepsilon<\varepsilon_0$ we have $\lambda_\varepsilon\leq-\lambda_m$.
\end{lemma}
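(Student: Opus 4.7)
The plan is to construct an explicit positive, $T$-periodic function $\psi_\varepsilon$ on $[0,T]\times\R^d$ satisfying $\mathcal L\psi_\varepsilon\leq-\lambda_m\psi_\varepsilon$ pointwise, where $\mathcal L=\partial_t-\varepsilon^2\Delta-a(t,x)$, and then to compare $\psi_\varepsilon$ with the principal eigenfunction $p_\varepsilon$ via a maximum-principle argument to conclude $\lambda_\varepsilon\leq-\lambda_m$.

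I take the Gaussian ansatz centered at $x_m$,
\[
\psi_\varepsilon(t,x)=g(t)\exp\!\left(-\frac{|x-x_m|^2}{2\beta\varepsilon}\right),\qquad g(t)=\exp\!\left(\int_0^t\bigl[a(s,x_m)-\overline a(x_m)\bigr]\,ds\right),
\]
with $\beta>0$ a small parameter to be fixed. The choice of $g$ makes $\psi_\varepsilon$ $T$-periodic in $t$, since $\int_0^T[a(s,x_m)-\overline a(x_m)]\,ds=0$. A direct computation gives
\[
\frac{\mathcal L\psi_\varepsilon}{\psi_\varepsilon}=-\overline a(x_m)+\bigl[a(t,x_m)-a(t,x)\bigr]-\frac{|x-x_m|^2}{\beta^2}+\frac{\varepsilon d}{\beta}.
\]
Since $a$ is $C^3$ in $x$, there is $C_1>0$ with $|a(t,x_m)-a(t,x)|\leq C_1|x-x_m|$ whenever $|x-x_m|\leq 1$; maximizing $C_1 s-s^2/\beta^2$ over $s\geq 0$ gives the bound $C_1^2\beta^2/4$ on this region. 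For $|x-x_m|>1$, the quadratic $-|x-x_m|^2/\beta^2\leq-1/\beta^2$ absorbs the bounded difference $|a(t,x_m)-a(t,x)|\leq 2d_0$ as soon as $\beta^2\leq 1/(2d_0)$. Choosing first $\beta$ so small that $C_1^2\beta^2/4\leq a_m/4$, then $\varepsilon_0$ so small that $\varepsilon d/\beta\leq a_m/4$ for all $\varepsilon<\varepsilon_0$, and using $\overline a(x_m)\geq a_m$ from \eqref{x_m}, I conclude that $\mathcal L\psi_\varepsilon/\psi_\varepsilon\leq-a_m/2=:-\lambda_m$ uniformly on $[0,T]\times\R^d$.

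To turn this subsolution inequality into the eigenvalue bound, set $\mu=-\lambda_m$ and introduce $\phi_\varepsilon:=\psi_\varepsilon e^{-\mu t}$ and $\omega_\varepsilon:=p_\varepsilon e^{-\lambda_\varepsilon t}$. A short verification shows that $\phi_\varepsilon$ is a subsolution and $\omega_\varepsilon$ a solution of the linear parabolic equation $\partial_t w=\varepsilon^2\Delta w+a(t,x)w$ on $[0,+\infty)\times\R^d$. Because $\psi_\varepsilon(0,\cdot)$ decays as a Gaussian while Lemma~\ref{existence_eigen}(ii) gives $p_\varepsilon(0,\cdot)$ only at most exponential decay, and since both are positive and continuous on compacta, there exists $\alpha>0$ with $\alpha\phi_\varepsilon(0,\cdot)\leq\omega_\varepsilon(0,\cdot)$ on $\R^d$. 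The maximum principle, applied in the class of bounded decaying functions, then yields $\alpha\psi_\varepsilon(t,x)\leq p_\varepsilon(t,x)\,e^{(\mu-\lambda_\varepsilon)t}$ for all $t\geq 0$ and $x\in\R^d$. If $\lambda_\varepsilon>\mu$, evaluating at $x=x_m$ would force the right-hand side to vanish as $t\to\infty$ while the left-hand side remains $T$-periodic and strictly positive, a contradiction; hence $\lambda_\varepsilon\leq-\lambda_m$.

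The main obstacle will be the global pointwise estimate on $\mathcal L\psi_\varepsilon/\psi_\varepsilon$, which requires combining the local Lipschitz control of $a$ near $x_m$ with the coercivity of the Gaussian exponent for $|x-x_m|$ away from $x_m$; the delicate balance between the two error contributions $C_1^2\beta^2/4$ and $\varepsilon d/\beta$ is what forces the two-step choice of $\beta$ and then of $\varepsilon_0$. A secondary technical point is the validity of the maximum principle on all of $\R^d$, but this is standard once both functions involved are known to decay at infinity.
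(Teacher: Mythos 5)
Your Gaussian-subsolution computation is correct and the overall strategy (build a positive $T$-periodic $\psi_\varepsilon$ with $\mathcal L\psi_\varepsilon\leq-\lambda_m\psi_\varepsilon$, then compare with $p_\varepsilon$) is a legitimate route, but it is genuinely different from the paper's and it has one real gap in the comparison step. To start the maximum-principle argument you need $\alpha\psi_\varepsilon(0,\cdot)\leq p_\varepsilon(0,\cdot)$ on all of $\R^d$, which requires a \emph{lower} bound on $p_\varepsilon$ at infinity. You justify it by invoking Lemma \ref{existence_eigen}(ii), but that lemma gives the \emph{upper} bound $p_\varepsilon(t,x)\leq\Vert p_\varepsilon\Vert_{L^\infty}e^{-\sqrt{\delta}(|x|-R_0)/\varepsilon}$; an upper bound on $p_\varepsilon$ cannot prevent $p_\varepsilon$ from decaying faster than your Gaussian, so the existence of $\alpha>0$ does not follow from what you cite. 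To close this you would need, e.g., a Harnack chain giving $p_\varepsilon(0,x)\geq c\,e^{-C|x|^2}$, or you would have to localize the comparison — and localizing is delicate because on $\partial B_R$ the Dirichlet eigenfunction vanishes while your Gaussian does not. The rest of the argument (that $\phi_\varepsilon$ is a subsolution and $\omega_\varepsilon$ a solution of the same linear equation, the Phragm\'en--Lindel\"of maximum principle on each finite time slab, and the contradiction at $x=x_m$ using periodicity of $g$) is fine once that initial-time ordering is secured. A minor additional point: the uniform Lipschitz constant $C_1$ for $x\mapsto a(t,x)$ near $x_m$, uniform in $t$, needs the joint regularity of $a$ on the compact set $[0,T]\times\overline{B(x_m,1)}$, which is implicit in \eqref{a_W3inf} but worth stating.

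For comparison, the paper avoids any global comparison on $\R^d$: it restricts to a small ball $B(x_m,R_1)$ on which $\int_0^T\min_{\overline B_{R_1}}a(t,\cdot)\,dt>0$, computes explicitly the principal periodic-parabolic Dirichlet eigenvalue of the space-homogeneous problem via the separable ansatz $\alpha(t)\varphi_1(x)$, obtaining $\underline{\mu_\varepsilon}=\gamma_1\varepsilon^2-\frac1T\int_0^Ta_{R_1}(t)\,dt$, and then concludes by the monotonicity of the principal eigenvalue with respect to the potential and to the domain (Hess), so that $\lambda_\varepsilon\leq\lambda_{R_1}\leq\underline{\mu_\varepsilon}\leq-\lambda_m$ for $\varepsilon$ small. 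That route buys a two-line conclusion at the price of citing the abstract monotonicity results; yours is more self-contained on the PDE side but transfers the difficulty to quantitative positivity of $p_\varepsilon$ at infinity, which is precisely the step you have not proved.
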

\begin{proof}
We follow the proof for the case of bounded domains, given in \cite{hess}.\\
For $R>0$ define $B_R:=B(x_m,R)$ and $a_R(t)=\dis\min_{x\in\overline{B}_R}a(t,x)$. Then we choose $R_1$ small enough such that 
\begin{equation}
\label{int_a_R0}
\int_0^Ta_{R_1}(t)dt>0.
\end{equation}
This is possible thanks to \eqref{x_m} and the continuity of $a$ with respect to $x$.\\
We first consider the periodic-parabolic Dirichlet eigenvalue problem on $[0,+\infty)\times B_{R_1},$
\begin{equation}
\label{Model_w_eps}
\left\{
\begin{array}{cr}
\partial_t \underline{w}-\varepsilon^2 \Delta\underline{w}-a_{R_1}(t)\underline{w}=\underline{\mu_\varepsilon}\underline{w},&\mathrm{in}\;[0,+\infty)\times B_{R_1},\\
\underline{w}=0, &\mathrm{on}\; [0,+\infty)\times\partial B_{R_1},\\
\underline{w}:\;T-\mathrm{periodic\;in}\;t.
\end{array}
\right.
\end{equation}
We calculate $\underline{\mu_\varepsilon}$ by the Ansatz $\underline{w}(t,x)=\alpha(t)\varphi_1(x)$ where $\varphi_1>0$ is the principal eigenfunction of 
$$
\left\{\begin{array}{rclr}
-\Delta\varphi_1&=&\gamma_1\varphi_1,&\mathrm{in}\;B_{R_1},\\
\varphi_1&=&0,&\mathrm{on}\;\partial B_{R_1},\\
\end{array}
\right.
$$
with principal eigenvalue $\gamma_1>0$. By substituting in \eqref{Model_w_eps} we deduce that
$$
\alpha'(t)\varphi_1+\gamma_1\varepsilon^2\alpha(t)\varphi_1-a_{R_1}(t)\alpha(t)\varphi_1=\underline{\mu_\varepsilon}\alpha(t)\varphi_1,
$$
and consequently
$$
\alpha(t)=\alpha(0)\exp\left(\int_0^ta_{R_1}(\tau)d\tau-(\gamma_1\varepsilon^2-\underline{\mu_\varepsilon})t\right).
$$
For $\underline{w}$ to be $T-$periodic we must have
$$
\int_0^Ta_{R_1}(\tau)d\tau-\gamma_1\varepsilon^2 T+T\underline{\mu_\varepsilon}=0.
$$
We deduce indeed that choosing
$$
\underline{\mu_\varepsilon}=\gamma_1\varepsilon^2-\frac{1}{T}\int_0^Ta_{R_1}(t)dt,
$$
we obtain the principal eigen-pair $(\underline{w},\underline{\mu_\varepsilon})$ for \eqref{Model_w_eps}. Next we consider the periodic-parabolic eigenvalue problem
$$
\left\{
\begin{array}{cr}
\partial_t w-\varepsilon^2 \Delta w-a(t,x)w=\lambda_{R_1}w,&\mathrm{in}\;[0,+\infty)\times B_{R_1},\\
w=0, &\mathrm{on}\;[0,+\infty)\times \partial B_{R_1},\\
w:\;T-\mathrm{periodic\;in}\;t.
\end{array}
\right.
$$
Since $a_{R_1}(t)\leq a(t,x)$ on $[0,+\infty)\times B_{R_1}$ we have $\lambda_{R_1}\leq\underline{\mu_\varepsilon}$ (Lemma 15.5 \cite{hess}). By monotony of eigenvalues with respect to the domain we obtain $\lambda_\varepsilon\leq\lambda_{R_1}\leq\underline{\mu_\varepsilon}$. Finally, thanks to \eqref{int_a_R0} we conclude that there exist $\lambda_m>0$, $\varepsilon_0>0$ such that for all $\varepsilon\leq\varepsilon_0$, we have $\lambda_\varepsilon\leq-\lambda_m$.
\begin{flushright}
$\blacksquare$
\end{flushright}
\end{proof}
In the following subsections we provide the proof of Theorem \ref{Prin_Theo_ueps}. In Subsection \ref{Unif_Bounds}, we give some global bounds for $\rho_\varepsilon$. Next, in Subsection \ref{regularity_u}, we prove that $(u_\varepsilon)$ is locally uniformly bounded, Lipschitz with respect to $x$ and locally equicontinuous in time. In the last subsection we conclude the proof of Theorem \ref{Prin_Theo_ueps} letting $\varepsilon$ goes to zero and describing the limits of $u_\varepsilon$, $n_\varepsilon$ and $\rho_\varepsilon$. 

\subsection{Uniform bounds for $\rho_\varepsilon$}
\label{Unif_Bounds}
In this section we provide uniform bounds for $\rho_\varepsilon$.

\begin{lemma}
\label{Borns_rho_Lemma}
Assume \eqref{a_W3inf}, \eqref{a_lambda_neg}. Then for every $\varepsilon>0$, there exist positive constants $\rho_m$ and $\rho_M$ such that
\begin{equation}
\label{Borns_rho_eps}
0<\rho_m\leq\rho_\varepsilon(t)\leq \rho_M\quad\forall t\geq 0.
\end{equation}
\end{lemma}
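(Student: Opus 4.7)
I would split the argument into an upper bound, then a lower bound, with the latter leaning on the relation $\int_0^T\rho_\varepsilon(s)\,ds=-\lambda_\varepsilon T$ derived from the Krein-Rutman characterization of the periodic solution together with the uniform bound $\lambda_\varepsilon\leq -\lambda_m$ from Lemma \ref{lam_eps_neg}.

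\textbf{Upper bound.} Integrating \eqref{Pb_n_Epsilon1} in $x$ (the Laplacian integrates to zero by the exponential decay of $n_\varepsilon$, which follows by the same supersolution argument as in Lemma \ref{bound_n} applied at the periodic level) gives
\[
\frac{d\rho_\varepsilon}{dt}=\int_{\mathbb{R}^d}n_\varepsilon(t,x)a(t,x)\,dx-\rho_\varepsilon(t)^2\leq d_0\rho_\varepsilon(t)-\rho_\varepsilon(t)^2,
\]
using $\|a\|_\infty\leq d_0$ from \eqref{a_W3inf}. Since $\rho_\varepsilon$ is continuous and $T$-periodic, it attains a maximum at some $t^*$ where $\rho_\varepsilon'(t^*)=0$; the inequality then forces $\rho_\varepsilon(t^*)\leq d_0$, so one takes $\rho_M=d_0$.

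\textbf{The integral identity.} Setting $m_\varepsilon(t,x)=n_\varepsilon(t,x)\exp\!\bigl(\int_0^t\rho_\varepsilon(s)\,ds\bigr)$, one sees that $m_\varepsilon$ solves the linear periodic-parabolic equation from \eqref{modelLap_in_m} with $\sigma=\varepsilon^2$. Let $S_T$ denote the period map for this linear equation. The $T$-periodicity of $n_\varepsilon$ yields
\[
S_T\bigl(n_\varepsilon(0,\cdot)\bigr)(x)=m_\varepsilon(T,x)=n_\varepsilon(0,x)\exp\Bigl(\int_0^T\rho_\varepsilon(s)\,ds\Bigr),
\]
so $n_\varepsilon(0,\cdot)$ is a positive eigenfunction of $S_T$. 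By Krein-Rutman, the only positive eigenfunction is the principal one $p_\varepsilon(0,\cdot)$, whose eigenvalue is $e^{-\lambda_\varepsilon T}$; matching eigenvalues gives
\[
\int_0^T\rho_\varepsilon(s)\,ds=-\lambda_\varepsilon T\geq \lambda_m T>0,
\]
where the last inequality uses Lemma \ref{lam_eps_neg}.

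\textbf{Lower bound.} The upper-bound computation also gives $\rho_\varepsilon'(t)\leq d_0\rho_\varepsilon(t)$, so Gronwall yields $\rho_\varepsilon(t)\leq \rho_\varepsilon(t_*)e^{d_0(t-t_*)}$ for $t\geq t_*$. Choosing $t_*$ to be a minimizer of $\rho_\varepsilon$ on $[0,T]$ and using periodicity to integrate over $[t_*,t_*+T]$,
\[
\lambda_m T\leq\int_{t_*}^{t_*+T}\rho_\varepsilon(t)\,dt\leq \rho_\varepsilon(t_*)\cdot\frac{e^{d_0T}-1}{d_0},
\]
so $\rho_\varepsilon(t)\geq\rho_\varepsilon(t_*)\geq\dfrac{\lambda_m T d_0}{e^{d_0T}-1}=:\rho_m>0$, uniformly in $\varepsilon<\varepsilon_0$.

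\textbf{Main obstacle.} The only delicate point is justifying the identity $\int_0^T\rho_\varepsilon=-\lambda_\varepsilon T$: one needs the spatial integration-by-parts at infinity (controlled by an $\varepsilon$-uniform Gaussian tail on $n_\varepsilon$ obtained as in Lemma \ref{bound_n}, applied to the periodic solution rather than to the Cauchy problem) and the uniqueness clause of Krein-Rutman, which is where the tail assumption \eqref{a_lambda_neg} from Lemma \ref{existence_eigen} enters. Everything else is an elementary ODE comparison.
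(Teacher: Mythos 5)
Your proof is correct, and for the lower bound it takes a genuinely different route from the paper's. The paper writes the periodic solution as $n_\varepsilon=\rho_\varepsilon P_\varepsilon$ so that $\rho_\varepsilon$ solves the logistic ODE $\rho_\varepsilon'=\rho_\varepsilon[Q_\varepsilon(t)-\rho_\varepsilon]$, then plugs into the \emph{explicit} closed-form expression \eqref{Exp_rho_eps} for its periodic solution and bounds the numerator and denominator separately, using $\int_0^TQ_\varepsilon=-\lambda_\varepsilon T\geq\lambda_m T$ (the identity \eqref{lambda}) together with $|Q_\varepsilon|\leq d_0$; this yields $\rho_m=\frac{1}{T}e^{-d_0T}(e^{\lambda_mT}-1)$. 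You instead extract only the period-average identity $\int_0^T\rho_\varepsilon=-\lambda_\varepsilon T$ (your derivation via the period map and the uniqueness of positive eigenfunctions is a legitimate alternative to the paper's, which gets the same identity by combining $n_\varepsilon=\rho_\varepsilon P_\varepsilon$ with \eqref{lambda}; in both cases the unbounded-domain Krein--Rutman step is supplied by Lemma \ref{existence_eigen}, as you note), and then convert the average bound into a pointwise one by Gronwall from the minimizer of the periodic function. Your argument is more elementary in that it never needs the explicit logistic formula, only the differential inequality $\rho_\varepsilon'\leq\rho_\varepsilon(d_0-\rho_\varepsilon)$ and the integral identity; the paper's approach buys a slightly more self-contained computation once \eqref{Exp_rho_eps} is accepted. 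Your upper bound (evaluating at an interior maximum of the periodic $\rho_\varepsilon$ to get $\rho_M=d_0$) is also a clean variant of the paper's $\rho_M=\max(\rho_\varepsilon^0,d_0)$. Two cosmetic caveats: both proofs really give constants uniform only for $\varepsilon\leq\varepsilon_0$, since Lemma \ref{lam_eps_neg} is used (the paper is in the same situation), and the integration by parts killing the Laplacian should indeed be justified by the exponential decay of $p_\varepsilon$ from \eqref{BoundSup_p}, as you indicate.
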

\begin{proof}
From equation \eqref{Pb_n_Epsilon1} integrating in $x\in\R^d$ and using assumption \eqref{a_W3inf} we get
\begin{equation}
\label{bornDerRho}
\frac{d\rho_\varepsilon}{dt}=\int_{\R^d}n_\varepsilon(t,x)[a(t,x)-\rho_\varepsilon(t)dx]\leq \rho_\varepsilon(t)[d_0-\rho_\varepsilon(t)].
\end{equation}
This implies that 
$$
\rho_\varepsilon(t)\leq\rho_M:=\max(\rho_\varepsilon^0,d_0).
$$
To obtain the lower bound we recall that $n_\varepsilon(t,x)=\rho_\varepsilon(t)P_\varepsilon(t,x)$, with $\rho_\varepsilon(t)$ the unique periodic solution of
$$
\frac{d\rho_\varepsilon}{dt}=\rho_\varepsilon(t)[Q_\varepsilon(t)-\rho_\varepsilon(t)],
$$
and with $Q_\varepsilon(t)$ and $P_\varepsilon(t,x)$ given by \eqref{PyQ}.
From \eqref{Exp_rho} we know that
\begin{equation}
\label{Exp_rho_eps}
\rho_\varepsilon(t)=\frac{1-\exp\left[-\dis\int_0^{T}Q_\varepsilon(s)ds\right]}{\exp\left[-\dis\int_0^{T}Q_\varepsilon(s)ds\right]\dis\int_t^{t+T}\exp\left[\dis\int_t^s Q_\varepsilon(\theta)d\theta\right]ds}.
\end{equation}
From Lemma \ref{lam_eps_neg}, we note that, $\lambda_\varepsilon=-\dfrac{1}{T}\dis\int_0^TQ_\varepsilon(t)dt\leq -\lambda_m$, thus
$$
\exp\left[-\dis\int_0^{T}Q_\varepsilon(\theta)d\theta\right]\leq e^{-T\lambda_m}.
$$
Also from \eqref{a_W3inf} and \eqref{PyQ} we get
$$
\int_t^{t+T}\exp\left[\dis\int_t^s Q_\varepsilon(\theta)d\theta\right]ds\leq\int_t^{t+T}e^{d_0T}ds=Te^{d_0T}. 
$$
Combining the above inequalities with \eqref{Exp_rho_eps} we obtain
$$
0< \rho_m:=\frac{1}{T}e^{-d_0T}\left(e^{\lambda_mT}-1\right)\leq \rho_\varepsilon(t),\quad \forall\;t\geq0.
$$
\begin{flushright}
$\blacksquare$
\end{flushright}
\end{proof}

\subsection{Regularity results for $u_\varepsilon$}
\label{regularity_u}
In this section we study the regularity properties of $u_\varepsilon =\varepsilon\ln \big((2\pi\varepsilon)^{d/2}n_\varepsilon\big)$, where $n_\varepsilon$ is the unique periodic solution of equation \eqref{Pb_n_Epsilon1}.\\
\begin{theorem}
Assume \eqref{a_W3inf}, \eqref{x_m} and \eqref{a_lambda_neg}. Then $u_\varepsilon$ is locally uniformly bounded and locally equicontinuous in time in $[0,T]\times \R^d$. Moreover, for some $D>0$, $\omega_\varepsilon=\sqrt{2D-u_\varepsilon}$, is Lipschitz continuous with respect to $x$ in $(0,\infty)\times\R^d$ and there exists a positive constant $C$ such that we have the following
\begin{equation}
\label{Nabla_omega}
\left|\nabla \omega_\varepsilon\right|\leq C,\quad \text{in }[0,+\infty)\times\R^d,
\end{equation}
\begin{equation}
\label{Equi_u}
\forall R>0,\ \sup_{t\in[0,T],\ x\in B_R}|u_\varepsilon(t,x)-u_\varepsilon(s,x)|\rightarrow0\quad\mathrm{as }\:\varepsilon\rightarrow 0.
\end{equation}
\end{theorem}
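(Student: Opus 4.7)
I would work throughout with the Hopf--Cole equation
\eqref{Pb_u_Epsilon} for $u_\varepsilon$. The plan is three-fold: (a) obtain uniform $L^\infty$ bounds on $u_\varepsilon$ on compact sets in $x$, (b) use a Bernstein-type argument on the auxiliary unknown $\omega_\varepsilon = \sqrt{2D-u_\varepsilon}$ to get the gradient bound \eqref{Nabla_omega}, and (c) read the time equicontinuity \eqref{Equi_u} off the PDE itself once the spatial regularity is secured.

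\textbf{Uniform bounds.} For the upper bound, I would look for a concave quadratic barrier of the form $\psi_\varepsilon(x) = -A|x-x_m|^2 + B$ and check that it is a supersolution of \eqref{Pb_u_Epsilon} for $A$ small enough, using that $\|a\|_{L^\infty}$ and $\rho_\varepsilon$ are bounded (Lemma \ref{Borns_rho_Lemma}); because $u_\varepsilon$ is time-periodic, comparison on $[0,T]\times\R^d$ then forces $u_\varepsilon \leq \psi_\varepsilon$ globally, giving the uniform upper bound $u_\varepsilon \leq D$ and decay at infinity. For the lower bound on compact sets, the key input is that $\rho_\varepsilon \geq \rho_m > 0$ combined with the exponential decay estimate for $n_\varepsilon$ (analogue of Lemma \ref{bound_n} iterated in time using periodicity). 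These force a definite fraction of the mass of $n_\varepsilon$ to live in a fixed ball $B_{R_1}$ at every time, and a standard parabolic Harnack inequality applied to the linearized equation \eqref{modelLap_in_m} for $n_\varepsilon$ then yields a pointwise lower bound $n_\varepsilon(t,x) \geq c\,\varepsilon^{-d/2} e^{-C/\varepsilon}$ on any compact set, which translates to $u_\varepsilon \geq -C$ locally.

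\textbf{Bernstein estimate.} With $D$ chosen so that $u_\varepsilon \leq D$, set $\omega_\varepsilon = \sqrt{2D-u_\varepsilon} \geq \sqrt{D} > 0$. Substituting $u_\varepsilon = 2D-\omega_\varepsilon^2$ into \eqref{Pb_u_Epsilon} and dividing by $-2\omega_\varepsilon$ produces a semilinear parabolic equation for $\omega_\varepsilon$ whose principal nonlinearity has the good sign $-2\varepsilon\omega_\varepsilon|\nabla\omega_\varepsilon|^2$. Differentiating this equation, multiplying by $\nabla\omega_\varepsilon$ and using the identity $\nabla\omega_\varepsilon\cdot\nabla\Delta\omega_\varepsilon = \tfrac12\Delta\xi_\varepsilon - |\nabla^2\omega_\varepsilon|^2$ with $\xi_\varepsilon := |\nabla\omega_\varepsilon|^2$ gives a PDE of the form
$$
\tfrac12\partial_t\xi_\varepsilon - \tfrac{\varepsilon^2}{2}\Delta\xi_\varepsilon + 2\varepsilon\,\omega_\varepsilon\,\xi_\varepsilon^{2} \;\leq\; \text{lower-order terms in } \xi_\varepsilon \text{ and in } \|a\|_{W^{1,\infty}}, \rho_\varepsilon.
$$
By periodicity in $t$ and the decay of $u_\varepsilon$ at infinity established above, $\xi_\varepsilon$ attains its maximum at some $(t^*, x^*) \in [0,T]\times\R^d$. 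There $\partial_t\xi_\varepsilon = 0$, $\nabla\xi_\varepsilon = 0$, $\Delta\xi_\varepsilon \leq 0$, and the displayed inequality collapses to an algebraic relation $\xi_\varepsilon(t^*, x^*)^2 \leq C(1+\xi_\varepsilon(t^*, x^*))$, yielding the desired bound $|\nabla\omega_\varepsilon| \leq C$ uniformly in $\varepsilon$.

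\textbf{Time equicontinuity and the main obstacle.} From $|\nabla\omega_\varepsilon|\leq C$ one deduces $|\nabla u_\varepsilon| \leq 2C\omega_\varepsilon$, which is locally bounded on compacta in $x$; combined with a similar Bernstein bound on $\Delta\omega_\varepsilon$ (or by expressing $\varepsilon^2\Delta u_\varepsilon$ via $\omega_\varepsilon$), the Hopf--Cole equation \eqref{Pb_u_Epsilon} immediately gives $|\partial_t u_\varepsilon| \leq C\varepsilon$ on each compact set, and integration in $t$ delivers \eqref{Equi_u}. The genuine difficulty of the whole argument, as flagged in the introduction, is that periodicity replaces initial-data bounds: in the absence of a useful $u_\varepsilon(0,\cdot)$, the Bernstein maximum over $[0,T]\times\R^d$ only makes sense if one has first established decay of $u_\varepsilon$ at infinity. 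This forces the analysis to be run in the order above, with the quadratic Gaussian barrier doing the double duty of bounding $u_\varepsilon$ from above and guaranteeing that the Bernstein quantity $\xi_\varepsilon$ attains its supremum at an interior point.
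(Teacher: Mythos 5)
Your overall architecture (uniform bounds, then a Bernstein estimate for $\omega_\varepsilon$, then time equicontinuity) matches the paper's, but three of your key steps do not go through as written. First, the upper bound: a concave quadratic $\psi_\varepsilon(x)=-A|x-x_m|^2+B$ is \emph{not} a supersolution of \eqref{Pb_u_Epsilon}, because the supersolution inequality reads $2Ad\varepsilon-4A^2|x-x_m|^2-a+\rho_\varepsilon\geq 0$ and the term $-|\nabla\psi_\varepsilon|^2=-4A^2|x-x_m|^2$ diverges to $-\infty$; shrinking $A$ does not help. More fundamentally, even with a correct barrier, "comparison on $[0,T]\times\R^d$ because $u_\varepsilon$ is time-periodic" is not a valid step: parabolic comparison between a supersolution and a solution requires an ordering at an initial time, which is precisely what periodicity denies you (this is the difficulty the introduction flags). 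The paper circumvents it by working at the level of $n_\varepsilon=\rho_\varepsilon\,p_\varepsilon/\int p_\varepsilon$, using the exponential decay \eqref{BoundSup_p} of the eigenfunction together with a Harnack inequality for the rescaled function $q_\varepsilon(t,x)=p_\varepsilon(t,\varepsilon x)$ to get the bound \emph{at} $t=0$, and only then propagating it by the maximum principle as in Lemma \ref{bound_n} (note the barrier there carries a growing factor $e^{C_3 t}$, which is why it works with initial data but not for a purely periodic comparison).

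Second, in the Bernstein step you assume $\xi_\varepsilon=|\nabla\omega_\varepsilon|^2$ attains its supremum over $[0,T]\times\R^d$ "by the decay of $u_\varepsilon$ at infinity"; decay of $u_\varepsilon$ gives no control whatsoever on $\nabla\omega_\varepsilon$ at infinity, so the maximum point $(t^*,x^*)$ may not exist. The paper handles exactly this by comparing $|W_\varepsilon|$ with the explicit strict supersolution $\overline{W}=\tfrac{1}{2\sqrt{t\varepsilon}}+\tfrac{A_5R^2}{R^2-|x|^2}+\theta$, whose blow-up at $|x|=R$ and $t=0$ forces the touching point to be interior, and then sends $R\to\infty$ and uses periodicity on the window $[T/\varepsilon,\,T/\varepsilon+T]$ to remove the singular term $1/(2\sqrt{t\varepsilon})$. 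Third, your route to \eqref{Equi_u} via $|\partial_t u_\varepsilon|\leq C\varepsilon$ requires a uniform bound on $\varepsilon\Delta u_\varepsilon$ (equivalently on second derivatives of $\omega_\varepsilon$), which does not follow from $|\nabla\omega_\varepsilon|\leq C$ and is not established by "a similar Bernstein bound on $\Delta\omega_\varepsilon$" — uniform second-order Bernstein estimates for this equation are a substantially harder matter and the paper does not prove them. Instead the paper obtains \eqref{Equi_u} by the Barles--Biton--Ley comparison argument with perturbed test functions $u_\varepsilon(s,x)+\eta+A|x-y|^2+\varepsilon B(t-s)$, which uses only the local $L^\infty$ bound and the spatial equicontinuity already at hand. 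Your lower-bound sketch (Harnack at scale $\varepsilon$ plus heat-kernel comparison) is in the right spirit, though you omit the key device of evaluating at times of order $1/\varepsilon$ and invoking periodicity to turn the Gaussian factor $e^{-C/\varepsilon^2}$ into $e^{-C/\varepsilon}$.
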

We prove this theorem in several steps. 
\subsubsection{An upper bound for $u_\varepsilon$}
We recall from \eqref{convPerio} that $n_\varepsilon(t,x)=\rho_\varepsilon(t)\dfrac{p_\varepsilon(t,x)}{\int_{\R^d}p_\varepsilon(t,x)dx}$, where
\begin{equation}
\label{p_eps}
\left\{
\begin{array}{lr}
\partial_t p_\varepsilon-\varepsilon^2\Delta p_\varepsilon-a(t,x)p_\varepsilon=\lambda_\varepsilon p_\varepsilon,&\mathrm{in}\;\R\times \R^d,\\
0<p_\varepsilon:\ T-\mathrm{periodic},\\
\Vert p_\varepsilon(0,x)\Vert_{L^\infty(\R^d)}=1.
\end{array}
\right.
\end{equation}
Define $q_\varepsilon(t,x)=p_\varepsilon(t,x\varepsilon)$, which satisfies 
\begin{equation}
\label{q_eps}
\left\{
\begin{array}{cr}
\partial_t q_\varepsilon-\Delta q_\varepsilon=a_\varepsilon(t,x)q_\varepsilon,&\mathrm{in}\;\R\times \R^d,\\
q_\varepsilon:\quad T-\mathrm{periodic},
\end{array}
\right.
\end{equation}
for $a_\varepsilon(t,x)=a(t,x\varepsilon)+\lambda_\varepsilon$. Note that $a_\varepsilon$ is uniformly bounded thanks to the $L^\infty-$norm of $a$, which together with Lemma \ref{lam_eps_neg} implies that $-d_0\leq\lambda_\varepsilon\leq-\lambda_m$.\\
Since $\Vert p_\varepsilon(0,x)\Vert_{L^\infty(\R^d)}=1$ we can choose $x_0$ such that $ p_\varepsilon(0,x_0)=1$. Moreover $q_\varepsilon$ is a nonnegative solution of \eqref{q_eps} in $(0,2T)\times B(\frac{x_0}{\varepsilon},1)$. Let $\delta_0$, be such that $0<\delta_0<T$, then we apply the Theorem 2.5 \cite{huska06} which is an elliptic-type Harnack inequality for positive solutions of \eqref{q_eps} in a bounded domain, and we have $\forall\;t\in[\delta_0,2T]$
$$
\sup_{x\in B(\frac{x_0}{\varepsilon},1)}q_\varepsilon(t,x)\leq C\inf_{x\in B(\frac{x_0}{\varepsilon},1)}q_\varepsilon(t,x),
$$
where $C=C(\delta_0,d_0)$. Returning to $p_\varepsilon$ this implies
\begin{equation}
\label{BoundInf_p}
p_\varepsilon(t_0,x_0)\leq \sup_{y\in B(x_0,\varepsilon)}p_\varepsilon(t_0,y)\leq C p_\varepsilon(t_0,x),\quad \forall(t_0,x)\in[\delta_0,2T]\times B(x_0,\varepsilon).
\end{equation}
Since $p_\varepsilon$ is $T-$periodic we conclude that the last inequality is satisfied $\forall\;t\in[0,T]$. 
From  \eqref{Borns_rho_eps}, \eqref{BoundInf_p} and the upper bound \eqref{BoundSup_p} for $p_\varepsilon$ with $\sigma=\varepsilon^2$, we obtain 
$$
n_\varepsilon(0,x)\leq \rho_M \dfrac{p_\varepsilon(0,x)}{\int_{\R^d}p_\varepsilon(0,x)dx}\leq \frac{C\rho_Mp_\varepsilon(0,x)}{\int_{B(x_0,\varepsilon)}p_\varepsilon(0,x_0)dx}\leq \rho_M \dfrac{Cp_\varepsilon(0,x)}{|B(x_0,\varepsilon)|}\leq C'\varepsilon^{-d}\exp^{\frac{C_1'-C_2'|x|}{\varepsilon}},
$$
for all $\varepsilon\leq\varepsilon_0$, with $\varepsilon_0$ small enough, where the constant $C'$ depends on $\rho_M$, $\Vert p\Vert_{L^\infty(R^d)}$ and the constant $C$ in \eqref{BoundInf_p} and $C_1'$ and $C_2'$ depend on the constants of hypothesis \eqref{a_lambda_neg}. Next we proceed with a Maximum Principle argument as in Lemma \ref{bound_n} to obtain for every $(t,x)\in[0,+\infty)\times\R^d$ and $C_3=(C_2')^2+d_0$, 
$$
n_\varepsilon(t,x)\leq C'\exp^{\frac{C_1'-C_2'|x|}{\varepsilon}+C_3 t}.
$$
From here and the periodicity of $u_\varepsilon$, with an abuse of notation for the constants, we can write, for all $\varepsilon\leq\varepsilon_0$
\begin{equation}
\label{Up_Bound_u}
u_\varepsilon(t,x)\leq C_1'-C_2'|x|,\quad \forall(t,x)\in[0,+\infty)\times\R^d.
\end{equation}
\subsubsection{A lower bound for $u_\varepsilon$}
Using the bounds for $a$ in \eqref{a_W3inf} and for $\rho_\varepsilon$ in \eqref{Borns_rho_eps} we obtain for $\widetilde{C}=d_0+\rho_M$
$$
\partial_t n_\varepsilon-\varepsilon^2\Delta n_\varepsilon\geq -\widetilde{C}n_\varepsilon.
$$
Let $n_\varepsilon^*$ be the solution of the following heat equation
$$
\left\{
\begin{array}{l}
\partial_t n_\varepsilon^*-\varepsilon^2\Delta n_\varepsilon^*+\widetilde{C}n_\varepsilon^*=0,\\
n_\varepsilon^*(0,x)=n_\varepsilon^0,
\end{array}
\right.
$$
given explicitly by the Heat Kernel $K$, 
$$
n_\varepsilon^*(t,x)=e^{-\widetilde{C}t}\left(n_\varepsilon^0\ast K\right)=\frac{e^{-\widetilde{C}t}}{\varepsilon^d(4\pi t)^{d/2}}\int_{\R^d}n_\varepsilon^0(y)e^{-\frac{|x-y|^2}{4t \varepsilon^2}}dy,\quad t>0.
$$
By a comparison principle we have $n_\varepsilon^*(t,x)\leq n_\varepsilon(t,x)$. Moreover, from \eqref{BoundInf_p} and \eqref{BoundSup_p} we deduce that
$$ 
\varepsilon^{-d}\widetilde{C}_0e^{-\frac{\widetilde{C}_1}{\varepsilon}}\leq \rho_m \dfrac{p_\varepsilon(0,x)}{\int_{\R^d}p_\varepsilon(0,x)dx}\leq n_\varepsilon(0,x)\quad\forall x\in B(x_0,\varepsilon),
$$
for some positive constants $\widetilde{C}_0$ and $\widetilde{C}_1$ depending on $\Vert p\Vert_{L^\infty}$, $\rho_m$, $\delta$, $d_0$, $d$, and $R_0$, and $x_0$ the point where $p_\varepsilon(0,x_0)=1$. Then
$$
\begin{array}{rcl}
n_\varepsilon(t,x)
&\geq &\dis\frac{\widetilde{C}_0}{\varepsilon^{2d}(4\pi t)^{d/2}}e^{-\frac{\widetilde{C}_1+\varepsilon\widetilde{C}t}{\varepsilon}}\int_{B(x_0,\varepsilon)}e^{-\frac{|x-y|^2}{4t \varepsilon^2}}dy\\
&\geq &
\dis\frac{\widetilde{C}_0|B(x_0,\varepsilon)|}{\varepsilon^{2d}(4\pi t)^{d/2}}e^{-\frac{2|x|^2+2(|x_0|+\varepsilon)^2}{4 t\varepsilon^2}-\frac{\widetilde{C}_1+\widetilde{C}t\varepsilon}{\varepsilon}}.
\end{array}
$$
This, together with the definition of $u_\varepsilon$, implies that
$$
\varepsilon\log\left(\frac{\widetilde{C} _0|B(x_0,\varepsilon)|}{\varepsilon^{2d}(4\pi t)^{d/2}}\right)-\frac{|x|^2+(|x_0|+\varepsilon)^2}{2t\varepsilon}-(\widetilde{C}_1+\widetilde{C}t\varepsilon)\leq u_\varepsilon(t,x),\quad\forall t\geq 0.
$$
In particular, we obtain that
$$
\varepsilon\log\left(\frac{\widetilde{C} _0|B(x_0,\varepsilon)|}{\varepsilon^{3d/2}(4\pi t)^{d/2}}\right)-\frac{|x|^2+(|x_0|+\varepsilon)^2}{2t}-(\widetilde{C}_1+\widetilde{C}t)\leq u_\varepsilon(\frac{t}{\varepsilon},x),\quad\forall t\in\left[1,1+\varepsilon T\right],
$$
and again, using the periodicity of $u_\varepsilon$, we obtain a quadratic lower bound for $u_\varepsilon$ for all $t\geq0$; that is, there exist $A_1$, $A_2\geq 0$ and $\varepsilon_0$ such that for all $\varepsilon\leq\varepsilon_0$,

\begin{equation}
\label{Low_Bound_u}
-A_1|x|^2-A_2\leq u_\varepsilon(t,x) .
\end{equation}

\subsubsection{Lipschitz bounds}
In this section we prove \eqref{Nabla_omega}. To this end we use a Bernstein type method closely related to the one used in \cite{MAA}. Let $\omega_\varepsilon = \sqrt{2C_1'-u_\varepsilon}$,  for $C_1'$ given by \eqref{Up_Bound_u}, thus $\omega_\varepsilon$ satisfies
$$
\frac{1}{\varepsilon}\partial_t\omega_\varepsilon-\varepsilon\Delta\omega_\varepsilon-\left(\frac{\varepsilon}{\omega_\varepsilon}-2\omega_\varepsilon\right)|\nabla\omega_\varepsilon|^2=\frac{a(t,x)-\rho_\varepsilon(t)}{-2\omega_\varepsilon}.
$$
Define $W_\varepsilon=\nabla \omega_\varepsilon$, which is also $T-$periodic. We differentiate the above equation with respect to $x$ and multiply by $\frac{W_\varepsilon}{|W_\varepsilon|}$, i.e
$$
\frac{1}{\varepsilon}\partial_t|W_\varepsilon|-\varepsilon\Delta|W_\varepsilon|-2\left(\frac{\varepsilon}{\omega_\varepsilon}-2\omega_\varepsilon\right)W_\varepsilon\cdot\nabla|W_\varepsilon|+\left(\frac{\varepsilon}{\omega_\varepsilon^2}+2\right)|W_\varepsilon|^3\leq\frac{\left(a(t,x)-\rho_\varepsilon(t)\right)|W_\varepsilon|}{2\omega_\varepsilon^2}-\frac{\nabla a\cdot W_\varepsilon}{2\omega_\varepsilon|W_\varepsilon|}.
$$
From \eqref{Up_Bound_u} we know that $u_\varepsilon\leq C_1'$, which together with \eqref{Low_Bound_u} implies
$$
\sqrt{C_1'}\leq \omega_\varepsilon\leq \sqrt{2C_1'+A_1|x|^2+A_2}.
$$
It follows that
$$
\left|2\left(\frac{\varepsilon}{\omega_\varepsilon}-2\omega_\varepsilon\right)\right|\leq A_4|x|+C_4,
$$
for some constants $A_4$ and $C_4$, from where, we have for $\theta$ large enough
\begin{equation}
\label{wp_eps_theta}
\frac{1}{\varepsilon}\partial_t|W_\varepsilon|-\varepsilon\Delta|W_\varepsilon|-\big(A_4|x|+C_4\big)\big\vert W_\varepsilon\cdot\nabla|W_\varepsilon|\big\vert+2\left(|W_\varepsilon|-\theta\right)^3\leq0.
\end{equation}
Let $T_M>2T$ and $A_5$ to be chosen later, define now, for $(t,x)\in\Big(0,\frac{T_M}{\varepsilon}\Big]\times B_R(0)$
$$
\overline{W}(t,x)=\frac{1}{2\sqrt{t\varepsilon}}+\frac{A_5R^2}{R^2-|x|^2}+\theta.
$$
We next verify that $\overline{W}$ is a strict supersolution of \eqref{wp_eps_theta} in $\Big(0,\frac{T_M}{\varepsilon}\Big]\times B_R(0)$. To this end we compute
$$
\partial_t \overline{W}=-\frac{1}{4t\sqrt{t\varepsilon}},\quad \nabla \overline{W}=\frac{2A_5R^2x}{(R^2-|x|^2)^2},\quad \Delta \overline{W}=\frac{2A_5R^2d}{(R^2-|x|^2)^2}+\frac{8A_5R^2|x|^2}{(R^2-|x|^2)^3},
$$
and then replace in \eqref{wp_eps_theta} to obtain
$$
\begin{array}{l}
\frac{1}{\varepsilon}\partial_t\overline{W}-\varepsilon\Delta \overline{W}-\big(A_4|x|+C_4\big)|\overline{W}\nabla \overline{W}|+2\left(\overline{W}-\theta\right)^3\\\\

=-\frac{1}{4\varepsilon t\sqrt{\varepsilon t}}-\varepsilon\Big[\frac{2A_5R^2d}{(R^2-|x|^2)^2}+\frac{8A_5R^2|x|^2}{(R^2-|x|^2)^3}\Big]-\big(A_4|x|+C_4\big)\left(\frac{1}{2\sqrt{\varepsilon t}}+\frac{A_5R^2}{R^2-|x|^2}+\theta\right)\frac{2A_5R^2|x|}{(R^2-|x|^2)^2}+2\left(\frac{1}{2\sqrt{\varepsilon t}}+\frac{A_5R^2}{R^2-|x|^2}\right)^3\\\\

\geq -\varepsilon \Big[\frac{2A_5R^2d}{(R^2-|x|^2)^2}+\frac{8A_5R^4}{(R^2-|x|^2)^3}\Big]-\big(A_4R+C_4\big)\left(\frac{1}{2\sqrt{\varepsilon t}}+\frac{A_5R^2}{R^2-|x|^2}+\theta\right)\frac{2A_5R^3}{(R^2-|x|^2)^2}\\\\

\quad+\frac{3A_5R^2}{R^2-|x|^2}\Big(\frac{1}{2t\varepsilon}+\frac{A_5R^2}{\sqrt{\varepsilon t}(R^2-|x|^2)}\Big)+\frac{2A_5^3R^6}{(R^2-|x|^2)^3},
\end{array}
$$
where we have used that $|x|\leq R$. One can verify that the RHS of the above inequality is strictly positive for $R > 1$, $\varepsilon\leq 1$, and $A_5>C\sqrt{T_M}$ for certain  constant $C$ large enough. Therefore, $\overline{W}$ is a strict supersolution of \eqref{wp_eps_theta} in $\Big(0,\frac{T_M}{\varepsilon}\Big] \times B_R(0)$ and for $\varepsilon\leq 1$.\\
We next prove that
$$
|W_\varepsilon(t,x)|\leq \overline{W}(t,x)\quad\mathrm{in}\;\Big(0,\frac{T_M}{\varepsilon}\Big]\times B_R(0).
$$
To this end, we notice that $\overline{W}(t, x)$ goes to $+\infty$ as $|x|\rightarrow R$ or as $t\rightarrow0$. Therefore, $|W_\varepsilon|(t, x) - \overline{W}(t, x)$ attains its maximum at an interior point of $\Big(0,\frac{T_M}{\varepsilon}\Big]\times B_R(0)$. We choose $t_m \leq \frac{T_M}{\varepsilon}$ the smallest time such that the maximum of $|W_\varepsilon|(t, x)-\overline{W}(t, x)$ in the set $(0,t_m]\times B_R(0)$ is equal to 0. If such $t_m$ does not exist, we are done.\\\\
Let $x_m$ be such that $|W_\varepsilon|(t, x)- \overline{W}(t, x)\leq |W_\varepsilon|(t_m, x_m)- \overline{W}(t_m, x_m)= 0$ for all $(t, x) \in(0, t_m)\times B_R(0)$. At such point, we have
$$
0\leq\partial_t\big(|W_\varepsilon|- \overline{W}\big)(t_m, x_m),\quad 0\leq -\Delta\big( |W_\varepsilon|- \overline{W}\big)(t_m, x_m),\quad  |W_\varepsilon|(t_m, x_m)\nabla |W_\varepsilon|(t_m, x_m)= \overline{W}(t_m, x_m))\nabla \overline{W}(t_m, x_m).
$$
Combining the above properties with the facts that $|W_\varepsilon|$ and $\overline{W}$ are respectively sub and strict supersolution of \eqref{wp_eps_theta}, we obtain that
$$
(|W_\varepsilon|(t_m, x_m)-\theta)^3-(\overline{W}(t_m, x_m)-\theta)^3<0\Rightarrow |W_\varepsilon|(t_m, x_m)<\overline{W}(t_m, x_m),
$$
which is in contradiction with the choice of $(t_m, x_m)$. We deduce, then that
$$
|W_\varepsilon(t,x)|\leq \frac{1}{2\sqrt{\varepsilon t}}+\frac{A_5R^2}{R^2-|x|^2}+\theta\quad\mathrm{for}\;(t,x)\in\Big(0,\frac{T_M}{\varepsilon}\Big] \times B_R(0),\;\forall\;R>1.
$$
We note that for $\varepsilon<\varepsilon_0$ small enough we have $\frac{T_M}{\varepsilon}>\frac{2T}{\varepsilon}>\frac{T}{\varepsilon}+T>\frac{T}{\varepsilon}$. Letting $R\rightarrow\infty$ we deduce that
$$
|W_\varepsilon(t,x)|\leq \frac{1}{2\sqrt{\varepsilon t}}+A_5+\theta\leq \frac{1}{2\sqrt{T}}+A_5+\theta\quad\mathrm{for}\;(t,x)\in\left[\frac{T}{\varepsilon},\frac{T}{\varepsilon}+T \right] \times \R^d.
$$
Finally we use the periodicity of $W_\varepsilon$ to extend the result for all $t\in[0,+\infty)$ and we obtain \eqref{Nabla_omega}.
\subsubsection{Equicontinuity in time}
From the above uniform bounds and continuity results we can also deduce uniform equicontinuity in time for the family $u_\varepsilon$ on compact subsets of $]0,+\infty]\times\R^d$ and prove \eqref{Equi_u}. We follow a method introduced in \cite{BarlesEqui}. \\
We will prove that for any $\eta>0$, we can find constants $A$, $B$ large enough such that: for any $x \in B(0, R/2)$, $s\in[0,T]$, and for all $\varepsilon<\varepsilon_0$ we have
\begin{equation}
\label{supersolution}
u_\varepsilon(t,y)-u_\varepsilon(s,x)\leq \eta + A|x-y|^2+\varepsilon B(t-s), \forall (t,y)\in[s,T]\times B_R(0),
\end{equation}
and
\begin{equation}
\label{subsolution}
u_\varepsilon(t,y)-u_\varepsilon(s,x)\geq -\eta- A|x-y|^2-\varepsilon B(t-s), \forall (t,y)\in[s,T]\times B_R(0).
\end{equation}
We provide the proof of \eqref{supersolution}. One can prove \eqref{subsolution} following similar arguments. 
\\Fix $(s, x)$ in
$[0, T[\times B_{R/2}(0)$. Define
$$
\widehat{\xi}(t,y)=u_\varepsilon(s,x) +\eta+ A|x-y|^2+\varepsilon B(t-s), \quad (t,y)\in[s,T[\times B_R(0),
$$
where $A$ and $B$ are constants to be determined. We prove that, for $A$ and $B$ large enough, $\widehat{\xi}$ is a super-solution to \eqref{Pb_u_Epsilon} on $[s,T]\times B_R(0)$ and $\widehat{\xi}(t,y)>u_\varepsilon(t,y)$ for $(t,y)\in \{s\}\times B_R(0)\cup [s,T]\times \partial B_R(0)$.\\
According to the previous section, $\{u_\varepsilon\}_\varepsilon$ is locally uniformly bounded, so we can take a constant $A$ such that for all $\varepsilon < \varepsilon_0$,
$$
\frac{8\Vert u_\varepsilon\Vert_{L^\infty([0,T]\times B_R(0))}}{R^2}\leq A. 
$$
With this choice, $\widehat{\xi}(t, y) > u_\varepsilon(t, y)$ on $[s, T ]\times \partial B_R(0)$, for all $\eta>0$, $B>0$ and $x\in B_{R/2}(0)$. \\
Next we prove that, for $A$ large enough, $\widehat{\xi}(s, y) > u_\varepsilon(s, y)$ for all $y\in B_R(0)$. We
argue by contradiction. Assume that there exists $\eta > 0$ such that for all constants $A$ there exists $y_{A,\varepsilon}\in B_R(0)$ such that
\begin{equation}
\label{Absurd}
u_\varepsilon(s,y_{A,\varepsilon)}-u_\varepsilon(s,x)>\eta+A|y_{A,\varepsilon}-x|^2.
\end{equation}
This implies
$$
|y_{A,\varepsilon}-x|\leq\sqrt{\frac{2M}{A}}\longrightarrow0,\quad\mathrm{as}\; {A\rightarrow\infty}.
$$
Here $M$ is an uniform upper bound for $\Vert u_\varepsilon\Vert_{L^\infty([0,T]\times B_R(0))}$. Then for all $h>0$, there exist $A$ large enough and $\varepsilon_0$ small enough, such that $\forall\varepsilon<\varepsilon_0$,
$$
|y_{A,\varepsilon}-x|\leq h.
$$
Therefore, from the uniform continuity in space of $u_\varepsilon$ taking  $h$ small enough, we obtain
$$
|u_\varepsilon(s,y_{A,\varepsilon})-u_\varepsilon(s,x)|<\eta/2\quad \forall\varepsilon\leq \varepsilon_0,
$$
but this is a contradiction with \eqref{Absurd}. Therefore $\widehat{\xi}(s, y) > u_\varepsilon(s, y)$ for all $y\in B_R(0)$. \\
Finally, noting that $R$ is bounded we deduce that for $B$ large enough, $\widehat{\xi}$ is a super-solution to \eqref{Pb_u_Epsilon} in $[s, T ] \times B_R(0)$. \\
Using a comparison principle, since $u_\varepsilon$ is also a solution of \eqref{Pb_u_Epsilon} we  have
$$
u_\varepsilon(t,y)\leq \widehat{\xi}(t,y)\quad \forall(t,y)\in[s,T]\times B_R(0).
$$
Thus \eqref{supersolution} is satisfied for $t\geq s\geq 0$. Then we put $x = y$ and obtain that for all $\eta>0$ there exists $\varepsilon_0>0$ such that for all $\varepsilon<\varepsilon_0$
$$
|u_\varepsilon(t,x)-u_\varepsilon(s,x)|\leq \eta + \varepsilon B(t-s),
$$
for every $(t,x)\in[0,T]\times B_R(0)$.
This implies that $u_\varepsilon$ is locally equicontinuous in time. Moreover letting $\varepsilon\rightarrow0$ we obtain \eqref{Equi_u}.

\subsection{Asymptotic behavior of $u_\varepsilon$}
Using the regularity results in the previous section we can now describe the behavior of $u_\varepsilon$ and $\rho_\varepsilon$ as $\varepsilon\rightarrow 0$ and prove Theorem \ref{Prin_Theo_ueps}.\\\\
\textbf{Step 1 (Convergence of $u_\varepsilon$ and $\rho_\varepsilon$)} According to section 3.1, $\{u_\varepsilon\}$ is locally uniformly bounded and  equicontinuous, so by the Arzela-Ascoli Theorem after extraction of a subsequence,  $u_\varepsilon(t,x)$ converges locally uniformly to a continuous function $u(t,x)$. Moreover from \eqref{Equi_u}, we obtain that $u$ does not depend on $t$, i.e $u(t,x)=u(x)$.\\
Moreover, from the uniform bounds on $\rho_\varepsilon$ we obtain using \eqref{bornDerRho} that $|\frac{d\rho_\varepsilon}{dt}|$ is bounded too. Thus applying again the Arzela-Ascoli Theorem we can assure that $\rho_\varepsilon(t)$ converges, along subsequences, to a function $\rho(t)$ as $\varepsilon\rightarrow 0$.\\\\
\textbf{Step 2 ($\max_{x\in\R^d}u(x)=0$)} 
Assume that for some $x_0$ we have $0 < \alpha \leq u(x_0)$. Since $u$ is continuous, we have $u(y)\geq\frac{\alpha}{2}$ on $B(x_0,r)$ for some $r > 0$. Thus, using the convergence of $u_\varepsilon$ there exists $\varepsilon_0$ such that for all $\varepsilon\leq\varepsilon_0$ we have $u_\varepsilon(y)\geq \frac{\alpha}{2}$ on $B(x_0,r)$, which implies that  
$$
|B(x_0,r)|\exp\left(\frac{\alpha}{2\varepsilon}\right)\leq\int_{B(x_0,r)}\exp\left(\frac{u_\varepsilon}{\varepsilon}\right)dx\leq \int_{\R^d}n_\varepsilon(t,x)dx=\rho_\varepsilon(t).
$$
Therefore
$\rho_\varepsilon\rightarrow\infty$ as $\varepsilon\rightarrow 0$. This is in contradiction with \eqref{Borns_rho_eps}. Thus $u(x)$ cannot be strictly greater than zero.\\
Next we have thanks to \eqref{Up_Bound_u} that
$$
\lim_{\varepsilon\rightarrow0}\int_{|x|>R}n_\varepsilon(t,x)dx\leq \lim_{\varepsilon\rightarrow0}\int_{|x|>R}e^{\frac{C_1'-C_2'|x|}{\varepsilon}}=0,
$$
for $R$ large enough.\\
From this and Lemma \eqref{Borns_rho_Lemma} we deduce that
\begin{equation}
\label{Lim_n_eps}\rho_m\leq \lim_{\varepsilon\rightarrow0}\int_{|x|\leq R}n_\varepsilon(t,x)dx.
\end{equation}
If $u(x) < 0$ for all $|x| < R$, then $u(x)<-\beta$ for some $\beta>0$, since we know that $u_\varepsilon$ converges locally uniformly to $u$, then there exists $\varepsilon_0$ small enough such that for all $\varepsilon\leq\varepsilon_0$, we have $u_\varepsilon(t,x)<-\frac{\beta}{2}$, $\forall|x|<R$. Therefore
$$
\int_{|x|\leq R}e^{\frac{u_\varepsilon(t,x)}{\varepsilon}}dx\leq\int_{|x|\leq R}e^{-\frac{\beta}{2\varepsilon}}dx=|B(x_0,R)|e^{-\frac{\beta}{2\varepsilon}}\longrightarrow0\quad\mathrm{as}\quad\varepsilon\rightarrow0.
$$ 
Note that this is in contradiction with \eqref{Lim_n_eps}. It follows that $\max_{x\in\mathbb{R}}u(x)=0$.\\\\
\textbf{Step 3 (The equation on $u$)}
We claim that $u(x)=\dis\lim_{\varepsilon\rightarrow0}u_\varepsilon(t,x)$ is a viscosity solution of problem \eqref{LimitEq}. Here, we prove that $u$ is a viscosity subsolution of \eqref{LimitEq}. One can prove, following similar arguments, that $u$ is also a viscosity supersolution of \eqref{LimitEq}.\\
Let us define the auxiliary ``cell problem"
\begin{equation}
\label{CellPb}
\left\{\begin{array}{cr}
\partial_t v=a(t,x)-\rho(t)-\overline{a}(x)+\overline{\rho},&(t,x)\in[0,+\infty)\times\R^d,\\ 
v(0,x)=0,\\
v:\;T-periodic.
\end{array}
\right.
\end{equation}
This equation has a unique smooth solution, that we can explicitly write
$$
v(t,x)=-t(\overline{a}(x)-\overline{\rho})+\int_0^t(a(t,x)-\rho(t))dt.
$$
\\
Let $\phi\in C^\infty$ be a test function and assume that $u-\phi$ has a strict local maximum at some point $x_0\in\R^d$, with $u(x_0)=\phi(x_0)$. We must prove that
\begin{equation}
\label{CondViscSub}
-|\nabla \phi|^2(x_0)-\overline{a}(x_0)+\overline{\rho}\leq 0.
\end{equation}
We define the perturbed test function $\psi_\varepsilon(t,x)=\phi(x)+\varepsilon v(t,x)$, such that $u_\varepsilon-\psi_\varepsilon$ attains a local maximum at some point $(t_\varepsilon,x_\varepsilon)$. We note that $\psi_\varepsilon$ converges to $\phi$ as $\varepsilon\rightarrow0$ since $v$ is locally bounded by definition, and hence one can choose $x_\varepsilon$ such that $x_\varepsilon\to x_0$ as $\varepsilon\to 0$, (see Lemma 2.2 in \cite{GuyBarles}). Then $\psi_\varepsilon$ satisfies
$$
\frac{1}{\varepsilon}\partial_t \psi_\varepsilon(t_\varepsilon,x_\varepsilon)-\varepsilon\Delta \psi_\varepsilon(t_\varepsilon,x_\varepsilon)-\big|\nabla \psi_\varepsilon(t_\varepsilon,x_\varepsilon)\big|^2-a(t_\varepsilon,x_\varepsilon)+\rho_\varepsilon(t_\varepsilon)\leq 0,
$$
since $u_\varepsilon$ is a viscosity solution of \eqref{Pb_u_Epsilon}. The above line implies that
$$
\partial_t v(t_\varepsilon,x_\varepsilon)-\varepsilon\Delta \phi(x_\varepsilon)-\varepsilon^2\Delta v(t_\varepsilon,x_\varepsilon)-\big|\nabla \phi(x_\varepsilon)+\varepsilon\nabla v(t_\varepsilon,x_\varepsilon)\big|^2-a(t_\varepsilon,x_\varepsilon)+\rho_\varepsilon(t_\varepsilon)\leq 0.
$$
Using \eqref{CellPb}, this last equation becomes
\begin{equation}
\label{EqLim}
-\varepsilon\Delta \phi(x_\varepsilon)-\varepsilon^2\Delta v(t_\varepsilon,x_\varepsilon)-\big|\nabla \phi(x_\varepsilon)+\varepsilon\nabla v(t_\varepsilon,x_\varepsilon)\big|^2+(\rho_\varepsilon-\rho)(t_\varepsilon)-\overline{a}(x_\varepsilon)+\overline{\rho}\leq 0.
\end{equation}
We can now pass to the limit as $\varepsilon\rightarrow0$. We know from step 1 that $\rho_\varepsilon\rightarrow\rho$ as $\varepsilon\rightarrow0$. Moreover $v$ is smooth with respect to $x$, with locally bounded derivatives with respect to $x$, thanks to its definition. Using these arguments and letting $\varepsilon\rightarrow0$ in \eqref{EqLim} we obtain \eqref{CondViscSub} which implies that $u$ is a viscosity sub-solution of \eqref{LimitEq}.\\\\
\textbf{Step 4 (Uniqueness and regularity of $u$)}
We first note that, for the case $x\in\R$, that is $d=1$, the solution given by \eqref{Exp_Sol_u} solves \eqref{LimitEq}.\\
In general, Hamilton-Jacobi equations of type \eqref{LimitEq}, may admit more than one viscosity solution. In this case the uniqueness is guaranteed thanks to Proposition 5.4 of Chapter 5 in \cite{Lions}, which assures that, since the RHS of the first equation in \eqref{LimitEq} is null at just one point $(x=x_m)$, and the value of $u$ in this point is known ($u=0$), together with the fact that $\max_{x\in\R^d} u(x)\leq 0$ the solution of \eqref{LimitEq} is unique and is given by 
$$
\begin{array}{l}
u(z)=\sup\Big\{u(x_m)-\dis\int_0^{T_0}\sqrt{\overline{\rho}-\overline{a}(\xi(s))}ds;\ (T_0,\xi)\text{ such that }\xi(0)=x_m,\ \xi(T_0)=z,\ \left|\dfrac{d\xi}{ds}\right|\leq 1,\text{ a.e in }[0,T_0],\\
\qquad\qquad\quad\text{ with }\xi(t)\in\R^d,\forall t\in[0,T_0]\Big\}.
\end{array}
$$
Moreover, in the case $x\in\R$, one can verify that the above formula is equivalent with \eqref{Exp_Sol_u} and such solution $u$ is $C^{3}(\R)$ since $\overline{a}(x)\in C^3(\R)$.\\\\
\textbf{Step 5 (Convergence of $n_\varepsilon$)}
We deal in this step with the result for the convergence of $n_\varepsilon$. To this end we proceed as in Section \ref{ConvDirac}.\\
Call $f_\varepsilon(t,x)=\dfrac{n_\varepsilon(t,x)}{\rho_\varepsilon(t)}$, then $f_\varepsilon$ is uniformly bounded in $L^\infty(\R^+,L^1(\R^d))$. Next, we fix $t\geq0$, and we follow the arguments of Section \ref{ConvDirac} to prove that $f_\varepsilon(t,\cdot)$, as function of $x$, converges, along subsequences, to a measure, as follows
$$
f_\varepsilon(t,\cdot)\rightharpoonup\delta(\cdot-x_m)\quad\mathrm{as}\quad \varepsilon\rightarrow0,
$$
weakly in the sense of measures in $x$.\\
Indeed, from \eqref{Exp_Sol_u} we deduce that 
$$
\max_{x\in\R^d}u(x)=u(x_m)=0.
$$
Then note $\mathcal{O}=\R^d\setminus B_\zeta(x_m)$, for $\zeta$ small enough  and $\psi\in C_c(\mathcal{O})$, such that $\mathrm{supp}\; \psi\subset \mathcal{K}$, for a compact set $\mathcal{K}$
$$
\left|\int_{\mathcal{O}}f_\varepsilon(t,x)\psi(x)dx\right|\leq \frac{1}{\rho_m}\int_{\mathcal{O}}e^{\frac{u_\varepsilon(t,x)}{\varepsilon}}|\psi(x)|dx\leq \frac{1}{\rho_m}\int_{\mathcal{K}}e^{{\frac{u_\varepsilon(t,x)}{\varepsilon}}}|\psi(x)|dx.
$$
From the locally uniform convergence of $u_\varepsilon$, since $ u(x)\leq-\beta$, $\forall x\in\mathcal{K}$, we obtain that there exists $\varepsilon_0>0$ such that $\forall\varepsilon<\varepsilon_0$,  $u_\varepsilon(t,x)\leq-\frac{\beta}{2}$, $\forall x\in\mathcal{K}$, and hence
$$
\int_{\mathcal{K}}e^{\frac{u_\varepsilon(t,x)}{\varepsilon}}|\psi(x)| dx\leq\int_{\mathcal{K}}e^{-\frac{\beta}{2\varepsilon}}|\psi(x)|dx\rightarrow0\quad\mathrm{as}\;\varepsilon\rightarrow0,
$$
since $\psi$ is bounded in $\mathcal{K}.$ Therefore, thanks to the $L^1$ bound of $f_\varepsilon$, we obtain that $f_\varepsilon$ converges weakly in the sense of measures and along subsequences to $\omega\delta(x-x_m)$ as $\varepsilon$ vanishes. We can proceed as in section \ref{ConvDirac} to prove that the convergence is in fact to $\delta(x-x_m)$.\\
Therefore using the convergence result for $\rho_\varepsilon$ we deduce finally \eqref{ConvTheo_eps}.\\\\
\textbf{Step 6 (Identification of the limit of $\rho_\varepsilon$)}
We try now to identify $\rho$ from the explicit expression for $\rho_\varepsilon$. To this end we need to compute the limit of $Q_\varepsilon$. Let $p_\varepsilon$ be the periodic solution of \eqref{p_eps}, we know that $p_\varepsilon(t,x)=\dfrac{n_\varepsilon(t,x)}{\rho_\varepsilon(t)}\dis\int_{\R^d}p_\varepsilon(t,y)dy$. Substituting in $Q_\varepsilon$ we obtain
$$
Q_\varepsilon(t)=\dfrac{\dis\int_{\R^d}a(t,x)p_\varepsilon(t,x)dx}{\dis\int_{\R^d}p_\varepsilon(t,x)dx}=\dfrac{\dis\int_{\R^d}a(t,x)\dfrac{n_\varepsilon(t,x)}{\rho_\varepsilon(t)}\int_{\R^d}p_\varepsilon(t,y)dy dx}{\dis\int_{\R^d}p_\varepsilon(t,x)dx}=\dfrac{\dis\int_{\R^d}a(t,x)n_\varepsilon(t,x)dx}{\rho_\varepsilon(t)}.
$$
From \eqref{ConvTheo_eps} and \eqref{x_m} we deduce that
$$
\lim_{\varepsilon\rightarrow0}Q_\varepsilon(t)=\lim_{\varepsilon\rightarrow0}\int_{\R^d}f_\varepsilon(t,x)a(t,x)dx=a(t,x_m).
$$
Finally we can pass to the limit in the expression \eqref{Exp_rho_eps} for $\rho_\varepsilon$, to obtain \eqref{Exp_rho}, which is in fact the unique periodic solution of the equation \eqref{SysRhoPer}.

\section{Approximation of the moments}
\label{Moments}
In this section we estimate the moments of the population's distribution with a small error, in the case $x\in\R$. To this end, we will use the formal arguments given in Section 1.\\
Using \eqref{Exp_Sol_u}, one can compute a Taylor expansion of order 4 around the point of maximum $x_m$
\begin{equation}
\label{ExpTay_u}
u(x)=-\frac{A}{2}(x-x_m)^2+B(x-x_m)^3+C(x-x_m)^4+O(x-x_m)^5.
\end{equation}
Note also that one can obtain $v$ formally from \eqref{Eq_v_Lap_u} and compute the following expansions
$$
v(t,x)=v(t,x_m)+D(t)(x-x_m)+E(t)(x-x_m)^2+O(x-x_m)^3,\quad w(t,x)=F(t)+O(x-x_m)^2.
$$
The above approximations of $u$, $v$ and $w$ around the maximum point of $u$ allow us to estimate the moments
of the population's distribution with an error of at most order $O(\varepsilon^2)$ as $\varepsilon\rightarrow 0$. \\
Replacing $u_\varepsilon$ by the approximation \eqref{App_u} and using the Taylor expansions of $u$, $v$ and $w$ given above, we can compute
$$
\begin{array}{lcl}
\dis\int_{\R^d}(x-x_m)^kn_\varepsilon(t,x)dx&=\dfrac{e^{v(t,x_m)}\varepsilon^{\frac{k}{2}}}{\sqrt{2\pi}}&\dis\int_{\R^d}y^k e^{\frac{-Ay^2}{2}}\big[1+\sqrt{\varepsilon}\left(By^3+D(t)y\right)\\&&+\varepsilon\left(Cy^4+E(t)y^2+F(t)+\frac{1}{2}(By^3+D(t)y)^2\right)+o(\varepsilon)\big]dy.
\end{array}
$$
Note that, to compute the above integral, we performed a change of variable $x-x_m=\sqrt{\varepsilon}\ y$. Therefore each term $x-x_m$ can be considered as of order $\sqrt{\varepsilon}$ in the integration. Note also that we have used the approximation
$$
n_\varepsilon(t,x)=\frac{1}{\sqrt{2\pi\varepsilon}}e^{\frac{u(x)}{\varepsilon}+v(t,x)+\varepsilon w(t,x)}.
$$ 
The above computation leads in particular to the following approximations of the population size, the phenotypical mean and the variance
$$
\left\{
\begin{array}{l}
\rho_\varepsilon=\dis\int_{\R^d}n_\varepsilon(t,x) dx=\frac{e^{v(t,x_m)}}{\sqrt{A}}\left[1+\varepsilon \left(\frac{15B^2}{2A^3}+\frac{3(C+BD(t))}{A^2}+\frac{E(t)+0,5D(t)^2}{A}+F(t)\right)\right] +O(\varepsilon^2),\\
\mu_\varepsilon=\dis\frac{1}{\rho_\varepsilon(t)}\int_{\R^d}x\ n_\varepsilon(t,x)dx= x_m+\varepsilon\left(\frac{3B}{A^2}+\frac{D(t)}{A}\right)+O(\varepsilon^2),\\
\sigma^2_\varepsilon=\dis\frac{1}{\rho_\varepsilon(t)}\int_{\R^d}(x-\mu_\varepsilon)^2n_\varepsilon(t,x)dx=\frac{\varepsilon}{A}+O(\varepsilon^2).
\end{array}
\right.
$$

\section{Some biological examples}
\label{examples_bio}
In this section, we present two examples where two different growth rates $a(t,x)$ are considered. In particular, the fluctuations act on different terms in the two examples, (they act respectively on the optimal trait and on the pressure of the selection).\\
We are motivated by a biological experiment in  \cite{Ketola}, where a population of bacterial pathogen \textit{Serratia marcescens} was studied. In this experiment several populations of Serratia marcescens were kept in environments with constant or fluctuating temperature for several weeks. Then, their growth rates were measured in different environments. In particular, 
it was observed that a population of bacteria that evolved in periodically  fluctuating temperature (daily variation between $24^\circ$C and $38^\circ$C, mean $31^\circ$C) outperforms the strains that evolved in cons\-tant temperature 
($31^\circ$C), when both strains are allowed to
compete in a constant environment with temperature $31^\circ$C.  Note that this is a surprising effect, since one expects that the population evolved in a constant environment would select for the best trait in such environment.\\
Here, we estimate the moments of the population's distribution and the mean effective fitness of the population in a constant enviroment for our two examples. We will observe that the second example, where the fluctuations act on the pressure of selection, allows to capture the phenomenon observed in the experiment in \cite{Ketola}. Our analysis shows that, in presence of the mutations and while the fluctuations act on the pressure of the selection, a fluctuating environment can select for a population with the same mean phenotypic trait but with smaller variance and in this way lead to more performant populations. \\
Note that our analysis is very well adapted to the mentioned experiment, since we first study the long time behavior of the phenotypical distribution and we find that it is the periodic solution of a selection-mutation equation. This distribution  corresponds to the phenotypical distribution of a population evolved in a periodic environment. Next, we characterize such distribution assuming small mutations. We remark that, although our analysis provides a possible explanation for the observed experience in \cite{Ketola}, one should  go back to the biological experiment and compare the population's distribution with our results to test this interpretation. 
 
\subsection{Oscillations on the optimal trait}
In this subsection we study a case where the optimal trait fluctuates periodically. We show that in this case,  the population's phenotypical mean follows the optimal trait with a delay.\\
We choose, as periodic growth rate
$$
a(t,x)=r-g(x-c\sin b t)^2, 
$$ 
where  $r,g,c$ and $b$ are positive constants. Here $r$ represents the maximal growth rate, $g$ models the selection pressure and the term $c\sin b t$ models the oscillations of the optimal trait with period $\frac{2\pi}{b}$ and amplitude $c$. \\
We compute the mean of $a(t,x)$ 
$$
\overline{a}(x)=\frac{b}{2\pi}\int_0^{\frac{2\pi}{b}}a(t,x)dt=r-g\left(x^2+\frac{c^2}{2}\right),
$$
and we observe that the maximum of $\overline{a}(x)$ is attained at $x_m=0$.\\
From here we can also compute the mean population size $\overline{\rho}_\varepsilon$. We do not provide an explicit formula for $\rho_\varepsilon$, but only for its mean value, in order to keep the simpler expression, however, for the higher order moments we give the exact value until order 1 in $\varepsilon$.\\
Let $u(x)$ be given by \eqref{Exp_Sol_u}, which can be rewritten in this specific example as follows
$$
u(x)=-\left\vert\int_0^x\sqrt{gy^2}dy\right\vert=-\frac{\sqrt{g}}{2}x^2,
$$
then we obtain, from \eqref{LimitEq} and the second equation in \eqref{Eq_v_Lap_u}
$$
\overline{\varrho}=\overline{a}(0)=r-\dfrac{g c^2}{2},\qquad \overline{k}=\Delta u(0)=-\sqrt{g}.
$$
On the other hand, by substituting $u(x)$ in \eqref{ExpTay_u} we find $A=\sqrt{g}$, $B=C=0$, and also by substitution in \eqref{Eq_v_Lap_u} we obtain
$$
\int_0^{\frac{2\pi}{b}}\partial_x v(t,x)dt=0,\quad \partial_t\partial_x v=2gc\sin b t.
$$
We deduce that 
$$
\partial_x v(0,x)=\dfrac{-2cg}{b}\quad\mathrm{and}\quad \partial_x v(t,x)=-\dfrac{2cg}{b}\cos b t.
$$
Now we are able to compute the approximations of order one with respect to $\varepsilon$ of the population mean size $\overline{\rho}_\varepsilon$, the phenotypical mean $\mu_p$ and the variance $\sigma^2_p$ of the population's distribution, following the computations we have done in the previous section, that is
$$
\mu_p(t)\approx\dfrac{2\varepsilon c}{b}\sqrt{g}\sin\left(b t-\frac{\pi}{2}\right),\quad \sigma^2_p\approx\frac{\varepsilon}{\sqrt{g}},\quad \overline{\rho}_\varepsilon\approx r-\dfrac{g c^2}{2}-\varepsilon\sqrt{g}.
$$
We observe, in  fact, that the mean trait $\mu_p(t)$ oscillates with the same period as the optimal trait with a delay $\frac{\pi}{2b}$, and a small amplitude, as was suggested for instance in \cite{Lande}.\\
We also compute $\widetilde{F}_p(\tau)$ the mean fitness of the population (evolved in the periodic environment), in an environment with temperature $\tau$, and hence with growth rate $a(\tau,x)$
\begin{equation}
\label{mean-fitness}
\widetilde{F}_p(\tau)=\dis\int_{\R^d}a(\tau,x)\frac{1}{T}\int_0^T\frac{n_\varepsilon(t,x)}{\rho_\varepsilon(t)}dt dx,
\end{equation}
which can be approximated for this example at $\tau=\frac{\pi}{b}$ by
$$
\quad\widetilde{F}_p(\pi/b)\approx r-\varepsilon\sqrt{g}.
$$
Note that \eqref{mean-fitness} is the quantity which has been measured in the experiment in \cite{Ketola}.\\
\\\\
We next consider a population which has evolved in a constant environment with $t=\frac \pi b$, (mean time), that is when the growth rate is given by  $a(\pi/b,x)=r-gx^2$. With such constant in time growth rate, the density of the population's distribution  converges in long time to the unique solution of the following stationary equation
$$
\begin{cases}
-\varepsilon^2\partial_{xx} n_c=n_c \big( r-gx^2-\rho_c\big),\\
\rho_c=\int_\R n_c dx.
\end{cases}
$$
The solution of the above equation can be computed explicitly and is given by
$$
n_c=\rho_c \frac{g^{\frac 14}}{\sqrt{2\pi \varepsilon}} \exp \left(\frac{-\sqrt{g}x^2}{2\varepsilon}\right),\quad \rho_c=r-\varepsilon\sqrt{g}.
$$
We can then compute the population mean size $\overline{\rho}_c$, the mean trait $\mu_c$ and the variance $\sigma_c^2$ for such population
$$
\mu_c=0,\quad \sigma_c^2=\frac{\varepsilon}{\sqrt{g}},\quad \overline{\rho}_c=r-\varepsilon\sqrt{g}.
$$
Here we observe that the size of a population evolved in a constant environment $\overline{\rho}_c$ is greater than the mean size of a population evolved in a fluctuating environment $\overline{\rho}_\varepsilon$.\\
Moreover, the mean fitness of such population, in an environment with the same temperature ($t=\frac{\pi}{b}$), can be computed as below
$$
\widetilde{F}_c=\dis\int_{\R}a(\pi/b,x)\frac{n_c(x)}{\rho_c} dx=r-\varepsilon\sqrt{g}.
$$
We hence obtain that, independently of the choice of constants $r,g$ and $c$, both populations (the one evolved in a constant environment and the other evolved in a fluctuating environment) have the same mean fitness at the same constant environment, up to order $\varepsilon$. This result does not correspond to what was observed in the experiment of \cite{Ketola}. In the next subsection we consider another example where the oscillations act differently on the growth rate and where the outcome corresponds more to the experiment of \cite{Ketola}.

\subsection{Oscillations on the pressure of the selection}
In this subsection, we study an example where the fluctuations act on the pressure of the selection. We show that in this case a population evolved in a fluctuating environment (for instance  with fluctuating temperature), may outperform a population evolved in a constant environment, in such constant environment. \\

\noindent
Here, we consider the following periodic growth rate
$$
a(t,x)=r-g(t)x^2,
$$
where $r>0$ is the maximal growth rate as in the previous example in Section 6.1 and $g(t)$ is a $1-$periodic function which models the oscillating pressure of selection.\\
Then $\overline{a}$ is given by
$$
\overline{a}(x)=r-\overline{g}x^2 \quad\mathrm{with}\quad\overline{g}=\int_0^1g(t)dt
,$$
where again the maximum of $\overline{a}(x)$ is attained at $x_m=0$.\\
We compute $u$ and $\partial_x v$ as before and obtain
$$
u(x)=-\frac{\sqrt{\overline{g}}}{2}x^2,\quad \partial_x v(t,x)=2x\left(t\overline{g}-\int_0^tg(t')dt'+\int_0^1\int_0^tg(t')dt'dt-\frac{\overline{g}}{2}\right).
$$
We compute again $\overline{\varrho}$ and $\overline{k}$, from \eqref{LimitEq} and \eqref{Eq_v_Lap_u}, in order to approximate $\overline{\rho}_\varepsilon$,  that is
$$
\overline{\varrho}=\overline{a}(0)=r,\quad \overline{k}=\Delta u(0)=-\sqrt{\overline{g}}.
$$
Then from the expression of $u$, again with the help of the formula from the previous section we obtain $A=\sqrt{\overline{g}}$, $B=C=0$ and $D(t)=0$.\\
We next compute the approximations of order one with respect to $\varepsilon$ of the population mean size $\overline{\rho}_\varepsilon$, the phenotypical mean $\mu_p$ and the variance $\sigma_p^2$ of the population's distribution which are given by 
$$
\mu_p\approx0,\quad \sigma_p^2\approx\frac{\varepsilon}{\sqrt{\overline{g}}},\quad \overline{\rho}_\varepsilon\approx r-\varepsilon \sqrt{\overline{g}}.
$$
Analogously to the previous example, we also compute $\widetilde{F}_p(\tau)$ the mean fitness of the population (evolved in the periodic environment), in an environment with temperature $\tau=\frac{1}{2}$, and hence with growth rate $a(\frac{1}{2},x)$, which can be approximated for this example as
$$
\quad\widetilde{F}_p(1/2)\approx r-\varepsilon\frac{g(1/2)}{\sqrt{\overline{g}}}.
$$

\noindent
We next consider, a population which has evolved in a constant environment with $t=\frac 1 2$, that is when the growth rate is given by  $a(1/2,x)=r-g(1/2)x^2$. Again, the density of the population's distribution converges in long time to the unique solution of the following stationary solution
$$
\begin{cases}
-\varepsilon^2\partial_{xx} n_c=n_c \big( r-g(1/2)x^2-\rho_c\big),\\
\rho_c=\int_\R n_c dx.
\end{cases}
$$
The explicit solution of the above equation is given by
$$
n_c=\rho_c \frac{g(1/2)^{\frac 14}}{\sqrt{2\pi \varepsilon}} \exp \big( \frac{-\sqrt{g(1/2)}x^2}{2\varepsilon } \big),\quad \rho_c=r-\varepsilon\sqrt{g(1/2)},
$$
from where we obtain the following population mean size $\overline{\rho}_c$, mean $\mu_c$ and variance $\sigma_c^2$ for such population
$$
\mu_c=0,\quad \sigma_c^2=\frac{\varepsilon}{\sqrt{g(1/2)}},\quad \overline{\rho}_c=r-\varepsilon\sqrt{g(1/2)}.
$$
Moreover, the mean fitness of such population, in an environment with the same temperature ($t=1/2$), can be computed as below
$$
\widetilde{F}_c=\dis\int_{\R^d}a(1/2,x)\frac{n_c(x)}{\rho_c} dx=r-\varepsilon\sqrt{g(1/2)}.
$$
We remark that if we choose $g(t)$ such that
\begin{equation}
\label{choice_g}
{\int_0^1g(t)dt}> {g\left(1/2 \right)},
\end{equation}
then we have
$$
\overline{\rho}_\varepsilon<\overline{\rho}_c, \quad\sigma^2_p<\sigma^2_c\quad\mathrm{and}\quad \widetilde{F}_c<\widetilde{F}_p(1/2).
$$
Here we observe that, for this choice of $g$ satisfying \eqref{choice_g}, the population evolved in a periodic environment has a larger fitness, in an environment with constant temperature ($\tau=1/2$) than the one evolved in a constant temperature ($\tau=1/2$). This property corresponds indeed to  what was observed in the biological experiment in \cite{Ketola}. Note that both of these environments select for populations with the same phenotypic mean trait $x=0$. However, the population evolved in a periodic environment has a smaller variance comparing to the one evolved in a constant environment. This makes the population evolved in the periodic environment more performant. This example shows that the phenomenon observed in the experiment of \cite{Ketola} can also be observed in mathematical models.

\section*{Acknowledgments}
Both authors thank Sylvain Gandon for fruitful discussions on the biological motivations. The first author is also immensely thankful to the ``Fondation de Sciences Math\'ematiques de Paris" (FSMP) for the opportunity of a second year of Master which helped very much to her formation. The second author is also grateful for partial funding from the European Research Council (ERC)
under the European Union's Horizon 2020 research and innovation programme (grant agreement No
639638), held by Vincent Calvez, and from the french ANR projects KIBORD ANR-13-BS01-0004
and MODEVOL ANR-13-JS01-0009.


\end{document}